\newcommand{\TheTitle}{Adaptive Multilevel Monte Carlo Approximation of Distribution Functions} 
\newcommand{\TheAuthors}{M. B. Giles, T. Nagapetyan, and K. Ritter}
\headers{\TheTitle}{\TheAuthors}
\title{\TheTitle}
\author{Mike B. Giles\thanks{Mathematical Institute, University of Oxford,
Andrew Wiles Building, Woodstock Rd, Oxford OX2 6GG,
England, \email{mike.giles@maths.ox.ac.uk}} 
\and Tigran Nagapetyan\thanks{Department of Statistics, University of 
Oxford, 24--29 St Giles', Oxford OX1 3LB, England, 
\email{nagapet\-yan@stats.ox.ac.uk}} \and 
Klaus Ritter\thanks{Fachbereich Mathematik, 
TU Kaiserslautern, 67653 Kaiserslautern, Germany,
\email{ritter@mathematik.uni-kl.de}}}
\newcommand{\R}{\mathbb{R}} 
\newcommand{\N}{\mathbb{N}} 
\newcommand{\eps}{\varepsilon}
\newcommand{\Eps}{\epsilon}
\newcommand{\M}{{\mathcal M}}
\newcommand{\SSS}{{\mathcal S}}
\newcommand{\MM} {\M^{k,\delta,L_0,L_1}_{N_{L_0}, \dots, N_{L_1}}}
\newcommand{\A}{{\mathcal A}}
\newcommand{\tl}{Y^{(\ell)}}
\newcommand{\Exp}    {\operatorname{E}}
\newcommand{\V}    {\operatorname{Var}}
\newcommand{\err}  {\operatorname{error}}
\newcommand{\cost} {\operatorname{cost}}
\newcommand{\bt}{\mathbf{t}}
\newcommand{\ba}{\mathbf{a}}
\newcommand{\ls}{{\scriptstyle \;\lesssim\;}}
\newtheorem{remark}{Remark}
\newtheorem{exmp}{Example}
\begin{document}

\maketitle

\begin{abstract}
We analyse a multilevel Monte Carlo method for the approximation 
of distribution functions of univariate random variables.
Since, by assumption, the target distribution is not known
explicitly, approximations have to be used. 
We provide an asymptotic analysis of the error and the cost 
of the algorithm. 
Furthermore we construct an adaptive version of the algorithm
that does not require any a priori knowledge on weak or strong
convergence rates. We apply the adaptive algorithm
to smooth path-independent and path-dependent functionals and 
to stopped exit times of SDEs.
\end{abstract}

\begin{keywords}
adaptive multi-level Monte Carlo, 
approximation of distribution functions, stochastic
differential equations, path-(in)dependent functionals, 
stopped exit times, smoothing
\end{keywords}

\begin{AMS}
65C30, 65C05, 60H35
\end{AMS}

\section{Introduction}
Let $Y$ denote a real-valued random variable
with distribution function $F$.
We study the approximation of $F$ with 
respect to the supremum norm on a compact interval
$[S_0,S_1]$, without assuming that
the distribution of $Y$ is explicitly known
or that the simulation of $Y$ is feasible. Instead,
we suppose that a sequence of random variables $\tl$ is
at hand that converge to $Y$ in a suitable way and
that are suited to simulation.

The general approach for this problem, based on the multilevel 
Monte Carlo (MLMC) approach \cite{heinrich98,giles08}, 
has been presented in \cite{giles2014multi} and applied 
in the context of stochastic differential equations (SDEs). 
The suggested algorithm has already been used in stochastic oil 
reservoir simulations, which is based on  numerical approximation of 
large-scale stochastic subsurface simulations \cite{WRCR:WRCR22402}, 
while the suggested smoothing technique has been used for analysis 
in \cite{plechavc2016multi}.

In outline, one approach is to use the standard MLMC algorithm
to approximate $F(s_i) = \Exp( 1_{]-\infty, s_i]} (Y) )$, for a finite
set of $k$ spline points $s_i$, and then use spline interpolation
to define the approximation to $F(s)$ for the whole interval.
The drawback of this approach is that the discontinuity in the
indicator function leads to a high variance for the MLMC estimator.
Instead, in \cite{giles2014multi} we introduce smoothing and 
approximate $\Exp( g((Y-s_i)/\delta) )$, where $g$ is a smooth 
approximation to the indicator function $1_{\left]-\infty,0\right]}$, 
and $\delta \ll S_1-S_0$.  This leads
to an approximation of $F$ which has four error components:
\begin{itemize}
\item
spline interpolation error, which depends on the number $k$ of
spline knots;
\item
smoothing error, which depends on the smoothing
parameter $\delta$;
\item
discretisation bias, which additionally
depends on the accuracy of $Y^{(L)}$ on the finest level $L$;
\item
Monte Carlo sampling error, which additionally depends on the number 
$N_\ell$ of samples on each level.
\end{itemize}

The standard MLMC algorithm has analysis and heuristics to
determine near-optimal values for $L$ and $N_\ell$
\cite{giles08,giles15}.  This paper
addresses the question of how to adaptively determine the
values for $k$ and $\delta$ to minimise the computational
cost to achieve a prescribed accuracy.  This extends the
asymptotic analysis in \cite{giles2014multi} 
which, roughly speaking,
assumes that the orders of weak and strong
convergence of $Y^{(\ell)}$ towards $Y$ are known a priori.

The paper is organised as follows. In Section \ref{s2} we recall briefly 
the algorithm strategy, and present updated bounds on the cost and the 
assumptions under which they were derived. In Section \ref{err:dec} we 
present the error decomposition.
Section \ref{s3} analyses the complexity of the MLMC
algorithm, i.e., 
it provides a new asymptotic upper bound of the cost of the 
MLMC algorithm in terms of its error.
In Section \ref{sec:algorithm} we describe the adaptive 
approach for distribution function approximation. 
Finally, Section \ref{s6} is devoted to the numerical experiments. 

For an alternative approach to the use of MLMC to construct approximations
of distribution functions with error analysis in different norm, see the recent work of Bierig and Chernov 
\cite{bc15,bc16} who use the Maximum Entropy method to approximate 
the distribution, and MLMC to obtain estimates for the required moments.

\section{Approximation of Distribution Functions on Compact Intervals
via MLMC}\label{s2}

In this section we present the multilevel algorithm for the
approximation of a distribution function $F$ of
a real-valued random variable $Y$ on a compact interval
$[S_0,S_1]$. In particular, we discuss the smoothing
and interpolation steps of the algorithm, which are
not present in the standard setting for MLMC, 
namely the approximation of the expectation $\Exp(Y)$.
Our approach basically follows \cite{giles2014multi}.
However, through an improvement in the implementation of
the algorithm we derive a new cost bound, which leads to an
improved upper bound for the cost of the algorithm in terms of 
its error, see Section \ref{r3}.

\subsection{Smoothing}\label{s2.1}

For the approximation of the distribution function
$F$ of $Y$ a straight-forward application of the 
MLMC approach based on
\[
F(s) = 
\Exp(1_{\left]-\infty,s\right]} (Y))
\]
would suffer from the discontinuity of
$1_{\left]-\infty,s\right]}$. This can be avoided 
by a smoothing step, provided that a density exists and is
sufficiently smooth. 
Specifically, we assume that
\begin{enumerate}
\item[(A1)]
the random variable $Y$ has a density $\rho$ on $\R$
that is $r$-times continuously differentiable on
$[S_0-\delta_0,S_1+\delta_0]$
for some $r \in \N_0$ and $\delta_0>0$.
\end{enumerate}

The smoothing is based on rescaled translates of a 
function $g : \R \to \R$ with 
the following properties:
\begin{enumerate}
\item[(S1)]
The cost of computing $g(s)$ is bounded by a constant, uniformly 
in $s \in \R$.
\item[(S2)]
$g$ is Lipschitz continuous.
\item[(S3)]
$g(s) = 1$ for $s<-1$ and $g(s)=0$ for $s>1$.
\item[(S4)]
$\int_{-1}^1 s^j \cdot 
(1_{\left]-\infty,0\right]} (s) - g(s)) \, ds = 0$ for
$j=0,\dots,r-1$.
\end{enumerate}
Obviously, $g$ is bounded due to (S2) and (S3).

\begin{remark}\label{r2}
Such a function $g$ is easily constructed as follows. 
There exists a uniquely determined
polynomial $p$ of degree at most $r+1$ such that
\[
\phantom{\qquad\quad j=0,\dots,r-1,}
\int_{-1}^1 s^j \cdot p(s) \, ds = (-1)^{j}/(j+1),
\qquad\quad j=0,\dots,r-1,
\]
as well as $p(1) = 0$ and $p(-1) = 1$.
The extension $g$ of $p$ with $g(s) = 1$ for $s < -1$ and
$g(s) = 0$ for $s> 1$ has the properties as claimed.
Since $g-1/2$ is an odd function, the same function $g$
arises in this way for $r$ and $r+1$, if $r$ is even.

For $r=3$, which will be considered in the numerical experiments,
we obtain
\[
p(s) = \frac{1}{2} + \frac{5s^3 - 9s}{8}. 
\]
\end{remark}

Using $\|\cdot\|_\infty$ to denote the supremum norm on $C([S_0,S_1])$,
we have the following estimate for the bias that is induced
by smoothing with parameter $\delta$, i.e., by approximation of
$1_{\left]-\infty,s\right]}$ by $g((\cdot -s)/\delta)$,
see \cite[Lemma~2.2]{giles2014multi}.

\begin{lemma}\label{l1}
There exists a constant $c>0$ such that
\[
\|F - \Exp(g((Y -\cdot )/\delta)) \|_\infty \leq c \cdot \delta^{r+1}
\]
holds for all $\delta \in \left]0,\delta_0\right]$.
\end{lemma}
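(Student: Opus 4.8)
The plan is to write the smoothing bias at a fixed point $s \in [S_0,S_1]$ as an integral against the density and exploit the moment conditions (S4). First I would note that, by (S3), for any $\delta \in \left]0,\delta_0\right]$,
\[
F(s) - \Exp\bigl(g((Y-s)/\delta)\bigr)
= \Exp\bigl( 1_{\left]-\infty,s\right]}(Y) - g((Y-s)/\delta) \bigr)
= \int_{\R} \bigl( 1_{\left]-\infty,0\right]}(y) - g(y/\delta) \bigr)\, \rho(s+y)\, \rd y .
\]
By (S3) the integrand vanishes outside $\left]-\delta,\delta\right[$, so the region of integration is contained in $[S_0-\delta_0,S_1+\delta_0]$, where by (A1) the density $\rho$ is $r$-times continuously differentiable. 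Substituting $y = \delta t$ turns this into $\delta \int_{-1}^1 \bigl( 1_{\left]-\infty,0\right]}(t) - g(t)\bigr)\, \rho(s+\delta t)\, \rd t$.

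Next I would Taylor-expand $t \mapsto \rho(s+\delta t)$ to order $r-1$ with integral remainder: $\rho(s+\delta t) = \sum_{j=0}^{r-1} \frac{(\delta t)^j}{j!} \rho^{(j)}(s) + R_r(s,\delta,t)$, where the remainder satisfies $|R_r(s,\delta,t)| \le \frac{(\delta|t|)^r}{r!} \sup |\rho^{(r)}|$ with the supremum taken over $[S_0-\delta_0,S_1+\delta_0]$, which is finite by (A1). The key point is that each polynomial term contributes, after pulling out $\rho^{(j)}(s)/j!$ and a power of $\delta$, exactly the integral $\int_{-1}^1 t^j\, (1_{\left]-\infty,0\right]}(t) - g(t))\, \rd t$, which is zero for $j=0,\dots,r-1$ by (S4). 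Hence only the remainder survives, giving
\[
\bigl| F(s) - \Exp(g((Y-s)/\delta)) \bigr|
\le \delta \int_{-1}^1 \bigl| 1_{\left]-\infty,0\right]}(t) - g(t)\bigr| \cdot |R_r(s,\delta,t)|\, \rd t
\le c\, \delta^{r+1},
\]
with $c = \frac{1}{r!}\,\bigl(\sup_{[S_0-\delta_0,S_1+\delta_0]}|\rho^{(r)}|\bigr) \int_{-1}^1 |t|^r\, |1_{\left]-\infty,0\right]}(t) - g(t)|\, \rd t$, the last integral being finite since $g$ is bounded by (S2)--(S3). Finally, since this bound is uniform in $s \in [S_0,S_1]$, taking the supremum yields the claimed estimate in $\|\cdot\|_\infty$.

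I do not expect a serious obstacle here; the proof is essentially a one-line Taylor expansion once the moment conditions are set up. The only points requiring mild care are: handling the case $r=0$, where there are no moment conditions and the bound $\|F - \Exp(g((Y-\cdot)/\delta))\|_\infty \le c\delta$ follows directly from boundedness of $\rho$ and of $g$; making sure the region of integration stays inside $[S_0-\delta_0,S_1+\delta_0]$ so that (A1) applies (this is exactly why $\delta \le \delta_0$ is assumed); and Fubini/measurability to interchange expectation and the $\rd y$ integral, which is justified since the integrand is bounded and compactly supported. This is precisely \cite[Lemma~2.2]{giles2014multi}, so I would either cite it or reproduce the short argument above.
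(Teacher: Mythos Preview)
Your proposal is correct and follows essentially the same approach as the paper: write the bias as $\delta\int_{-1}^1 \rho(s+\delta t)\,(1_{\left]-\infty,0\right]}(t)-g(t))\,\rd t$, Taylor expand $\rho(s+\delta t)$ to order $r-1$, use (S4) to annihilate the polynomial terms, and bound the remainder by a constant times $\delta^{r+1}$, treating $r=0$ separately. The only differences are cosmetic (you use the Lagrange form of the remainder where the paper writes out the integral form, and you make the use of $\delta\le\delta_0$ for (A1) explicit), so citing \cite[Lemma~2.2]{giles2014multi} or reproducing this short argument is entirely appropriate.
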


\begin{proof}
Clearly
\begin{align*}
F(s) - \Exp (g((Y-s)/\delta) )
& = 
\int_{-\infty}^\infty \rho(u) \cdot 
(1_{\left]-\infty,s\right]} (u) - g((u-s)/\delta)) \, du\\
& =
\delta \cdot \int_{-1}^{1} \rho(s+\delta u) \cdot 
(1_{\left]-\infty,0\right]} (u) - g(u)) \, du,
\end{align*}
so that the statement follows in the case $r=0$.
For $r \geq 1$ the Taylor expansion
\[
\rho(s+\delta u) = \sum_{j=0}^{r-1}
\rho^{(j)}(s) \cdot (\delta u)^j / j! 
+ R(\delta u, s)
\]
yields
\begin{align*}
&F(s) - \Exp (g((Y-s)/\delta)) = \delta \cdot
\int_{-1}^{1} R(\delta u, s) \cdot
(1_{\left]-\infty,0\right]} (u) - g(u)) \, du\\
&\qquad = \frac{\delta^{r+1}}{(r-1)!} \int_{-1}^1\! \int_0^u
(u-t)^{r-1} \cdot \rho^{(r)} (s+\delta t)\, dt \cdot
(1_{\left]-\infty,0\right]} (u) - g(u)) \, du,
\end{align*}
which completes the proof.
\end{proof}

\begin{remark}\label{r1}
Consider $g$ according to Remark \ref{r2}, and assume that
$r$ is odd. Since 
\begin{align*}
&\int_{-1}^1\! \int_0^u
(u-t)^{r-1} \cdot \rho^{(r)} (s+\delta t)\, dt \cdot
(1_{\left]-\infty,0\right]} (u) - g(u)) \, du \\
&\qquad =
\mbox{}- \int_{0}^1\! \int_t^1
(u-t)^{r-1} \cdot g(u) \, du \cdot \bigl(\rho^{(r)} (s+\delta t)+
\rho^{(r)}(s-\delta t)
\bigr) \, dt,
\end{align*}
we have
\begin{align*}
&|F(s) - \Exp (g((Y-s)/\delta))|\\
&\qquad  \leq
\frac{2 \delta^{r+1}}{(r-1)!} 
\int_{0}^1 \left| \int_t^1
(u-t)^{r-1} \cdot g(u) \, du \right| \, dt \cdot 
\sup_{s \in [S_0-\delta,S_1+\delta]} |\rho^{(r)}(s)|.
\end{align*}
In particular, for $r=3$,
\[
\int_t^1
(u-t)^{2} \cdot g(u) \, du = \frac{-1}{96} \cdot (t-1)^4 \cdot
(t^2+4t+1),
\]
and therefore 
\[
\|F - \Exp (g((Y-\cdot)/\delta))\|_\infty
\leq \frac{\delta^4}{280} \cdot
\sup_{s \in [S_0-\delta,S_1+\delta]} |\rho^{(3)}(s)|.
\]
\end{remark}

\subsection{Interpolation and Monotonicity Corrections}\label{s44}

In the sequel 
$|\cdot|_\infty$ denotes the $\ell_\infty$-norm on $\R^k$. 

The approximation of $F$ on the interval $[S_0,S_1]$ is based on 
its approximation at finitely many points
\[
S_0 \leq s_1 < \dots < s_k \leq S_1,
\]
followed by a suitable extension to $[S_0,S_1]$.
For the extension we take a sequence of mappings
$Q_k^{r} : \R^k \to C([S_0,S_1])$ with the following
properties for some constant $c>0$: 
\begin{enumerate}
\item[(E1)]
For all $k \in \N$ and $x \in \R^k$
the cost for computing $Q_k^{r}(x)$ is bounded by $c \cdot k$. 
\item[(E2)]
For all $k \in \N$ and 
$x,y \in \R^k$
\[
\|Q_k^{r}(x) - Q_k^{r}(y)\|_\infty \leq c \cdot |x-y|_\infty.
\]
\item[(E3)]
For all $k \in \N$
\[
\|F - Q_k^{r}(F(s_1), \dots, F(s_k))\|_\infty \leq c \cdot k^{-(r+1)}.
\]
\end{enumerate}
These properties are easily achieved, e.g., by
piecewise polynomial interpolation with degree $\max(r,1)$
at equidistant points.

In terms of the spacing parameter $\tau$ of equidistant points,
the interpolation error is of the order $\tau^{r+1}$.
Since the smoothing error is of the order $\delta^{r+1}$ then
to balance these errors one has to take $\tau$ and $\delta$ of the
same order. 

\begin{remark}\label{r3}
In the numerical experiments we will first consider
the following simple cubic interpolant which is linear 
with respect to its inputs.
Let $r=3$, assume that $k= 3n+1$ with $n \in \N$, and put 
\[
\tau = (S_1-S_0)/n
\]
as well as
\[
\phantom{\qquad\quad j=1,\dots,k.}
s_j = S_0+ (j-1) \cdot (S_1-S_0)/(k-1),
\qquad\quad j=1,\dots,k.
\]
Furthermore, let $Q_k^3$ denote the
piecewise polynomial interpolation
of degree three at four consecutive knots.
The classical error estimate for polynomial interpolation yields
\begin{equation}\label{g45}
\|F - Q_k^{3}(F(s_1), \dots, F(s_k))\|_\infty \leq 
\frac{\tau^4}{1944} \cdot \|\rho^{(3)}\|_\infty.
\end{equation}
Furthermore, the corresponding 
Lipschitz constant is easily computed
explicitly, giving
\begin{equation}\label{g44}
\|Q_k^{3}(x)-Q_k^{3}(y)\|_\infty \leq c \cdot |x-y|_\infty,
~~~~~~
c = \frac{7 \cdot(2\sqrt{7} + 1)}{27} \approx 1.63.
\end{equation}
%
\end{remark}

The resulting vectors of data $y=(y_1,\dots,y_k) \in \R^k$ generated by 
the multilevel construction are not necessarily non-decreasing. Even
when they are, there is the possibility that a piecewise polynomial
interpolant may not be non-decreasing.
Therefore we employ a two stage  post-processing for the resulting 
approximation as described in Remark \ref{r3a}.

\begin{remark}\label{r3a}
For $y = (y_1,\dots,y_k) \in \R^k$ we define $u_0=0$ and
\[
u_j = \min(\max(y_j,u_{j-1}),1)
\]
to get a non-decreasing sequence $u = (u_1,\dots,u_k) \in [0,1]$.
For symmetry reasons we also define
$v = (v_1,\dots,v_k) \in [0,1]$
with
\[
v_j = \max(\min(y_j,v_{j+1}),0),
\]
where $v_{k+1}=1$. 
Observe that 
$y \mapsto (u+v)/2$ defines a Lipschitz continuous mapping
with Lipschitz constant 1 w.r.t.\ $|\cdot|_\infty$.
In the second stage of the post-processing 
the piecewise polynomial interpolant $\varphi$ of $(u+v)/2$, 
as described in Remark \ref{r3}, is transformed into a non-decreasing
function that coincides with $\varphi$ at the knots $s_j$.
To this end we put
\[
f(s) = \min \Bigl(\, \max_{t \in [s_{j-1},s]} \varphi
(t),\varphi(s_j)\Bigr)
\]
and
\[
h(s) = \max \Bigl(\, \min_{t \in [s,s_{j}]}
\varphi(t),\varphi(s_{j-1})\Bigr)
\]
for $s \in [s_{j-1},s_j]$.
Observe that 
$\varphi \mapsto (f+h)/2$ defines a Lipschitz continuous mapping
with Lipschitz constant 1 w.r.t.\ $\|\cdot\|_\infty$.

Instead of the plain interpolation according to Remark \ref{r3},
in the numerical experiments we now consider
\[
Q_k^3 (y) = (f+h)/2.
\]
Clearly we have (E2) with $c$ given by \eqref{g44}.
Furthermore, $u=v=y$ for $y=(F(s_1),\dots,F(s_k))$
and 
\[
\max(\|F-f\|_\infty,\|F-h\|_\infty) \leq \|F - \varphi\|_\infty
\]
for every distribution function $F$ and every $\varphi \in
C([S_0,S_1])$. Consequently, \eqref{g45} holds true, 
and in particular we
have (E3). To actually compute $(f+h)/2$, which is a piecewise
polynomial of degree three, one has to solve at most $2k$
polynomial equations of degree two or three, and therefore
we also have (E1).
\end{remark}

\subsection{The MLMC Algorithm}\label{s2.3}

Our multilevel Monte Carlo construction is based on
a sequence $(Y^{(\ell)})_{\ell\in\N_0}$ of  
random variables, defined on a common probability space together
with $Y$.
Assumptions on the cost for the simulation of the joint
distribution of $Y^{(\ell)}$ and $Y^{(\ell-1)}$ and on the
weak and strong convergence will be specified
in Sections \ref{s2.4} and \ref{s2.2}.

For notational convenience we put
\[
\phantom{\qquad\quad t \in \R,}
g^{k,\delta} (t) = \left( g((t-s_1)/\delta), \dots,
g((t-s_k)/\delta) \right)
\in \R^k,
\qquad\quad t \in \R.
\]

We choose $L_0, L_1 \in \N_0$ with $L_0 \leq L_1$ as the
minimal and the maximal level, respectively,
and we choose replication
numbers $N_\ell \in \N$ for all levels $\ell=L_0,\dots,L_1$, as well as 
$k \in \N$
and $\delta \in \left]0,\delta_0\right]$. 
The corresponding MLMC algorithm for the approximation at the 
points $s_j$ is defined by
\[
\MM
=
\frac{1}{N_{L_0}} \cdot \sum_{i=1}^{N_{L_0}} 
g^{k,\delta} (Y^{(L_0)}_i) +
\sum_{\ell=L_0+1}^{L_1}
\frac{1}{N_\ell} \cdot \sum_{i=1}^{N_\ell} 
\left( g^{k,\delta} (Y^{(\ell)}_i) - g^{k,\delta} (Z^{(\ell)}_i)
\right) 
\]
with an independent family of $\R^2$-valued random variables
$(Y^{(\ell)}_i,Z^{(\ell)}_i)$ for 
$\ell=L_0,\dots,L_1$
and 
$i=1,\dots,N_\ell$ such that equality in distribution holds for
$(Y^{(\ell)}_i,Z^{(\ell)}_i)$ and $(Y^{(\ell)},Y^{(\ell-1)})$ 
if $\ell > L_0$ as well as for $Y^{(L_0)}_i$ and $Y^{(L_0)}$.

In the particular case $L=L_0=L_1$, i.e., in the single-level case,
we actually have a classical Monte Carlo algorithm,
based on independent copies of $Y^{(L)}$ only.
In addition to 
\[
\SSS^{k,\delta,L}_{N} =
\M^{k,\delta,L,L}_{N} =
\frac{1}{N} \cdot \sum_{i=1}^{N} g^{k,\delta} (Y^{(L)}_i)
\]
with $\delta >0$, we also consider the single-level algorithm
without smoothing. Hence we put
\[
\phantom{\qquad\quad t \in \R,}
g^{k,0} (t) = \left( 1_{\left]-\infty,s_1\right]}(t), \dots,
1_{\left]-\infty,s_k\right]}(t) \right)
\in \R^k,
\qquad\quad t \in \R,
\]
to obtain 
\[
\SSS^{k,0,L}_{N} = 
\M^{k,0,L,L}_{N} =
\frac{1}{N} \cdot \sum_{i=1}^{N} g^{k,0} (Y^{(L)}_i).
\]
Observe that $\SSS^{k,0,L}_{N}$ yields the values of the empirical 
distribution function, based on $N$ independent copies of $Y^{(L)}$,
at the points $s_j$. 

We employ $Q_k^{r}(\M)$ with 
\[
\M = \MM
\]
as a randomized algorithm for
the approximation of $F$ on $[S_0,S_1]$. 

\subsection{Cost Bounds}\label{s2.4}

In our analysis of the computational cost we assume 
the following for some constant $c>0$:
\begin{enumerate}
\item[(A2)] There exists a constant $M>1$ such that
the simulation of the joint
distribution of $Y^{(\ell)}$ and $Y^{(\ell-1)}$ is possible at cost
at most $c \cdot M^\ell$ for every $\ell \in \N$. 
\end{enumerate}
Typically, $M$ is a refinement factor, e.g., for the time-step
of a numerical approximation scheme for a stochastic differential
equation.

Together with the property (S1) for $g$, the assumption (A2) yields 
the following upper bound. There exists a constant $c>0$ such
that, for all $k,\delta,L_0,L_1,N_{L_0},\dots,N_{L_1}$,
\[
\cost\left(Q_k^{r}\left(\M\right)\right)
\leq c \cdot
\sum_{\ell=L_0}^{L_1} N_{\ell}\cdot (M^\ell+k)
\]
for the cost of $Q_k^r(\M)$, see
\cite[Eqn. (2.16)]{giles2014multi}.
In fact, for every replication on level $\ell$ the simulation
cost is at most of the order $M^\ell$, while the cost to evaluate
$g^{k,\delta}$ is of the order $k$. The number of arithmetic
operations that are needed additionally is of the order
$\sum_{\ell=L_0}^{L_1} N_\ell$, 
plus an additional order $k$ for the interpolation.

As an extension to the analysis in \cite{giles2014multi},
we derive an improved cost bound in the case of
equidistant knots. Observe that every realization of $\M$ is of the form
$\sum_{i=1}^N a_i g^{k,\delta}(t_i)$, where 
\begin{align*}
N &= N_{L_0} + 2 \sum_{\ell=L_0+1}^{L_1} N_\ell,\\
\ba& =(a_1,\dots,a_N) \in  \R^N
\end{align*}
with $|a_i| \in \{1/N_{L_0},\dots,1/N_{L_1}\}$,
and 
\[
\bt=(t_1,\dots,t_N) \in  \R^N
\]
being a realization
of $(Y_1^{(L_0)},\dots,Z_{N_{L_1}}^{(L_1)},Y_{N_{L_1}}^{(L_1)})$.
Taking into account that 
$g=1$
on $\left]-\infty,-1\right]$ and $g=0$ on $\left[1,\infty\right[$, 
the sum $\sum_{i=1}^N a_i g^{k,\delta}(t_i)$
may be evaluated in the following, efficient way, 
if the knots $s_j$ are chosen equidistantly. Put 
\[
j^*(t) = \min \{j \in \{1,\dots,k\} : s_j > t + \delta\}
\]
for $t \in \R$ with $\min \emptyset = \infty$, and
define $g_1^{k,\delta}(t) \in \R^k$ 
by 
\[
g_{1,j}^{k,\delta}(t) =
\begin{cases}
1, &\text{if $j=j^*(t)$,}\\
0, &\text{otherwise,}
\end{cases}
\]
for $j=1,\dots,k$.
Due to the uniform spacing of the knots,
the cost to compute $j^*(t)$ is bounded by
a constant, uniformly in $t$ and $k$. Hence
we have a uniform cost bound
of order $N$ to compute
\[
g_2^{k,\delta} (\ba,\bt) = \sum_{i=1}^N 
a_i g_1^{k,\delta}(t_i) \in \R^k
\]
for $N,k \in \N$ and $\ba,\bt \in  \R^N$.
Consider $\psi:\R^k \to \R^k$ given by
$\psi(x) = (x_1,x_1+x_2,\dots,\sum_{j=1}^k x_j)$,
which is sometimes called the scan function or the
cumulative summation function.
The vector
\[
g_3^{k,\delta}(\ba,\bt)= \psi(g_2^{k,\delta}(\ba,\bt)) \in \R^k
\]
of successive partial sums of $g_2^{k,\delta}(\ba,\bt)$
may therefore be computed at a cost uniformly bounded by a multiple
of $\max(N,k)$. Finally, let $g_4^{k,\delta}(t) \in \R^k$
be given by
\[
g_{4,j}^{k,\delta}(t) = 1_{[t-\delta,t+\delta]}(s_j)
\cdot g((t-s_j)/\delta)
\]
for $j = 1,\dots,k$ and $t \in \R$. Due to the uniform 
spacing of the knots, $g_4^{k,\delta}(t)$ may be
computed at a cost uniformly bounded by $\max(k \cdot \delta,1)$ 
for $t \in
\R$, $k \in \N$, and $\delta >0$. 
Since 
\[
\sum_{i=1}^N a_i g^{k,\delta} (t_i) = g_3^{k,\delta}(\ba,\bt)
+ \sum_{i=1}^N a_i g_4^{k,\delta}(t_i),
\]
the cost to compute this sum is uniformly bounded by
$k + N \cdot \max(k \cdot \delta, 1)$, up to a constant.
Altogether this yields the cost bound
\begin{equation}
\cost\left(Q_k^{r}\left(\M\right)\right)
\leq c \cdot
\left(
k + 
\sum_{\ell=L_0}^{L_1} N_{\ell}\cdot (M^\ell+k\cdot\delta)
\right)
\label{eq:cost:mlmc:new}
\end{equation}
with a constant $c>0$ for all
$k,\delta,L_0,L_1,N_{L_0},\dots,N_{L_1}$.
Typically $N_{L_0} \cdot M^{L_0}$ dominates $k$
and $M^{L_0}$ dominates $k \cdot \delta$,
so that the upper bound is of the order
$\sum_{\ell=L_0}^{L_1} N_{\ell}\cdot M^\ell$
and neither smoothing nor interpolation affects
the cost bound.

The adaptive MLMC algorithm that will be introduced in
Section \ref{sec:algorithm} involves, in particular,
a variance estimation step. It is crucial that the cost
for this step also stays within the cost bound \eqref{eq:cost:mlmc:new}.

\subsection{Error Decomposition}\label{err:dec}

Observe that $\M$ is square-integrable, since $g$ is bounded, 
so that (E2) yields
$\Exp\|Q_k^{r}(\M)\|_\infty^2 < \infty$. 
The error of $Q_k^{r}(\M)$ is defined by 
\begin{equation}\label{g48}
\err (Q_k^{r}(\M)) = \left( \Exp\|F - Q_k^{r}(\M)\|_\infty^2 \right)^{1/2}.
\end{equation}

The variance of any square-integrable $\R^k$-valued random 
variable $\M$ is defined by
\[
\V (\M) = \Exp|\M - \Exp(\M)|_\infty^2,
\]
and 
\[
\Exp|x - \M|_\infty^2 \leq 2 \cdot (|x - \Exp(\M)|_\infty^2 + \V(\M))
\]
holds for $x \in \R^k$. 

For $\delta > 0$ the error of $Q_k^{r}(\M)$ may be decomposed into
the interpolation error
\[
e_1 = \bigl\|F - Q_k^{r}(F(s_1), \dots, F(s_k))\bigr\|_\infty,
\]
the smoothing error
\[
e_2 = \bigl|(F(s_1), \dots, F(s_k)) - \Exp (g^{k,\delta} (Y))\bigr|_\infty,
\]
and the bias
\[
e_3 =
\bigl|\Exp (g^{k,\delta} (Y)) -\Exp (g^{k,\delta} (Y^{(L_1)}))
\bigr|_\infty 
\]
as well as the variance 
\[
e_4 = \V (\M)
\]
of $\M$. In fact, we have
\begin{align*}
\err(Q_k^{r}(\M)) &\leq 
e_1 + \left( \Exp \| Q_k^{r}( (F(s_1), \dots, F(s_k))-\M)
\|_\infty^2\right)^{1/2}\\
&\leq
e_1 + \|Q_k^r\| \cdot 
\left( \Exp | (F(s_1), \dots, F(s_k))-\M|_\infty^2\right)^{1/2},
\end{align*}
where $\|Q_k^r\|$ denotes the Lipschitz constant of 
$Q_k^r$.
Since
\begin{align*}
\left( \Exp | (F(s_1), \dots, F(s_k))-\M|_\infty^2\right)^{1/2}
&\leq 
e_2 +
\left( \Exp | \Exp(g^{k,\delta}(Y)) -\M|_\infty^2\right)^{1/2}\\
&\leq e_2 + \sqrt{2} \cdot (e^2_3+e_4)^{1/2},
\end{align*}
we obtain
\begin{equation}
\label{err:dec:num}
\err(Q_k^{r}(\M)) \leq 
e_1 + \|Q_k^r\| \cdot \left( e_2 + \sqrt{2} \cdot (e^2_3 + e_4)^{1/2}
\right).
\end{equation}
If we do not apply smoothing, i.e., for $\delta = 0$, we formally
have $e_2=0$, which leads to
\begin{equation}\label{g69}
\err(Q_k^{r}(\M)) \leq 
e_1 + 
\sqrt{2} \cdot \|Q_k^r\| \cdot 
\left(
|(F(s_1), \dots, F(s_k)) -\Exp (\M)|^2_\infty+ e_4\right)^{1/2}.
\end{equation}

The Bienaym\'e formula for real-valued random variables turns
into the inequality
\[
\V (\M)  \leq c(k) \cdot 
\sum_{i=1}^n \V(\M_i),
\]
if $\M = \sum_{i=1}^n \M_i$ with independent square-integrable random 
variables $\M_i$ taking values in $\R^k$.
Here $c(k)$ only depends on the dimension $k$, and
there exist $c_1,c_2 >0$ such that
\begin{equation}\label{eq300}
c_1 \cdot \log(k+1) \leq c(k) \leq c_2 \cdot \log(k+1)
\end{equation}
for every $k \in \N$.
In the context of multilevel algorithms this is exploited
in \cite{heinrich98}
for the first time.
We refer to \eqref{eq204} and \eqref{eq202} in the Supplementary Materials
for an explicit value of $c(k)$.

Consequently,
\begin{equation}\label{eq301}
\V (\M) \leq c(k) \cdot
\left( \frac{\V (g^{k,\delta}(Y^{(L_0)}))}{N_{L_0}} 
+ 
\sum_{\ell=L_0+1}^{L_1}
\frac{
\V (g^{k,\delta}(Y^{(\ell)}) - g^{k,\delta}(Y^{(\ell-1)}))}
{N_\ell} 
\right)
\end{equation}
for the variance of the multilevel algorithm $\M$. 

\section{Asymptotic Analysis of the MLMC Algorithm}\label{s3}

Using the improved cost bound \eqref{eq:cost:mlmc:new}
we derive an improved version of \cite[Thm.\ 2.6]{giles2014multi},
which gives an asymptotic upper bound of the cost of
the multilevel algorithm in terms of its error.

\subsection{Assumptions on Weak and Strong Convergence}\label{s2.2}

In our analysis of the bias and the variance of the
multilevel algorithm $\M$ we assume that the following properties
hold for some constant $c>0$ with $M$ according to (A2):
\begin{enumerate}
\item[(A3)] There exist constants
$\alpha_1 \geq 0$, $\alpha_2>0$, and $\alpha_2 \geq \alpha_3
\geq 0$
such that the weak error estimate
\[
\sup_{s \in [S_0,S_1]} \left| \Exp\left( 
g((Y- s)/\delta) - g((Y^{(\ell)} - s)/\delta) \right) \right|
\leq c \cdot \min \left(
\delta^{-\alpha_1} \cdot M^{-\ell \cdot \alpha_2}, M^{-\ell \cdot
\alpha_3} \right)
\]
holds for all $\delta \in \left]0,\delta_0\right]$ 
and $\ell \in \N_0$. 
\item[(A4)] There exist constants $\beta_4 \geq 0$ 
and $\beta_5 >0$ such that the strong error estimate
\[
\Exp\min ( (Y-Y^{(\ell)})^2/\delta^2, 1) 
\leq c \cdot 
\delta^{-\beta_4} \cdot M^{-\ell \cdot \beta_5}
\]
holds for all $\delta \in \left]0,\delta_0\right]$ and 
$\ell \in \N_0$. 
\end{enumerate}

Assumption (A4) and the Lipschitz continuity and boundedness of $g$
immediately yield the following fact,
see \cite[Lemma~2.4]{giles2014multi}.

\begin{lemma}\label{l2-xx}
There exists a constant $c>0$ such that
\[
\Exp\sup_{s \in [S_0,S_1]} 
\bigl( g((Y-s)/\delta) - g((\tl-s)/\delta)\bigr)^2
\leq c \cdot \min( \delta^{-\beta_4} \cdot M^{-\ell\cdot \beta_5},1)
\]
holds for all $\delta \in \left]0,\delta_0\right]$ and 
$\ell \in \N_0$.
\end{lemma}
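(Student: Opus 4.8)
The plan is to bound the supremum over $s$ of the squared difference pointwise by a quantity that no longer depends on $s$, and then take expectations. Fix a realization. For a given $s$, write $h_s(y) = g((y-s)/\delta)$. I would split into two cases according to whether $|Y - Y^{(\ell)}|$ is large or small relative to $\delta$. First, since $g$ is bounded (by (S2) and (S3), say $\|g\|_\infty \le C_g$), we always have $(h_s(Y) - h_s(Y^{(\ell)}))^2 \le 4 C_g^2$, uniformly in $s$; this gives the trivial bound by the constant $1$ (up to the constant $c$), which handles the case when $\min((Y-Y^{(\ell)})^2/\delta^2, 1)$ is not small. Second, by (S2), $g$ is Lipschitz with some constant $L_g$, so $g((y-s)/\delta)$ is Lipschitz in $y$ with constant $L_g/\delta$, whence $(h_s(Y) - h_s(Y^{(\ell)}))^2 \le (L_g/\delta)^2 (Y - Y^{(\ell)})^2$, again uniformly in $s$.

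Combining the two pointwise bounds, I get
\[
\sup_{s \in [S_0,S_1]} \bigl( g((Y-s)/\delta) - g((Y^{(\ell)}-s)/\delta)\bigr)^2
\le c' \cdot \min\!\left( (Y - Y^{(\ell)})^2/\delta^2,\, 1 \right)
\]
with $c' = \max(4 C_g^2, L_g^2)$, since the right-hand side of the trivial bound dominates when $(Y-Y^{(\ell)})^2/\delta^2 \ge 1$ and the Lipschitz bound dominates otherwise. Taking expectations and invoking (A4),
\[
\Exp \sup_{s \in [S_0,S_1]} \bigl( g((Y-s)/\delta) - g((Y^{(\ell)}-s)/\delta)\bigr)^2
\le c' \cdot \Exp \min\!\left( (Y - Y^{(\ell)})^2/\delta^2,\, 1 \right)
\le c' c \cdot \delta^{-\beta_4} M^{-\ell \beta_5}.
\]
Finally, the same trivial bound $(h_s(Y)-h_s(Y^{(\ell)}))^2 \le 4C_g^2$ shows the left-hand side is bounded by a constant outright, so we may replace $\delta^{-\beta_4} M^{-\ell\beta_5}$ by $\min(\delta^{-\beta_4} M^{-\ell\beta_5}, 1)$ at the cost of enlarging the constant, which is the claimed form.

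There is essentially no hard part here: the lemma is an immediate consequence of (A4) once one observes that the pointwise-in-$s$ bounds on $(g((Y-s)/\delta)-g((Y^{(\ell)}-s)/\delta))^2$ are uniform in $s$, so the supremum can be pulled inside before taking the expectation. The only mild subtlety to mention is the measurability of $s \mapsto g((Y-s)/\delta) - g((Y^{(\ell)}-s)/\delta)$, which is continuous in $s$ by (S2), so the supremum over the compact interval $[S_0,S_1]$ is a genuine random variable (indeed equal to the supremum over a countable dense subset).
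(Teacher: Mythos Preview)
Your proof is correct and follows exactly the approach indicated in the paper, which simply notes that the lemma is an immediate consequence of (A4) together with the Lipschitz continuity and boundedness of $g$. Your write-up spells out precisely these ingredients (the pointwise-in-$s$ Lipschitz bound $L_g^2 (Y-Y^{(\ell)})^2/\delta^2$ and the trivial bound $4C_g^2$), so there is nothing to add.
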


\begin{remark}\label{r200}
For the analysis of the single-level algorithm
$\SSS^{k,\delta,L}_N$, i.e., for
$L=L_0=L_1,$ it suffices to
assume that
the simulation of the distribution of $Y^{(\ell)}$ is possible at cost
at most $c \cdot M^\ell$ for every $\ell \in \N_0$, cf.\ (A2). 
Furthermore, there is no need for a strong error estimate like
(A4), and if we do not employ smoothing, then
(A3) may be replaced by the following assumption.
There exist a constant $\alpha > 0$ 
such that the weak error estimate
\[
\sup_{s \in [S_0,S_1]} \left| \Exp\left( 
1_{\left]-\infty,s\right]} (Y) - 1_{\left]-\infty,s\right]} (Y^{(\ell)})
\right) \right|
\leq c \cdot M^{-\ell \cdot \alpha}
\]
holds for all $\ell \in \N_0$. 
It turns out that
the analysis of single-level algorithms  $\SSS^{k,0,L}_N$without smoothing
is formally reduced to the case $\delta >0$
if we take
\begin{equation}\label{g203}
\alpha_1 = 0, \qquad \alpha_2 = \alpha, \qquad \alpha _3 = \alpha.
\end{equation}
\end{remark}

\subsection{Main Result}\label{s3.2}

We say that a sequence of randomized algorithms $\A_n$ 
converges with order $(\gamma,\eta) \in \left]0,\infty\right[ 
\times \R$ if $\lim_{n\to \infty} \err (\A_n) = 0$ and if
there exists a constant $c > 0$ such that
\[
\cost (\A_n) \leq c \cdot (\err (\A_n))^{-\gamma} \cdot
( - \log \err (\A_n))^{\eta}.
\]
Moreover, we put
\[
q = \min \left( \frac{r+1+\alpha_1}{\alpha_2}, \frac{r+1}{\alpha_3}
\right).
\]

\begin{theorem}\label{t2}
Assume that the cost bound \eqref{eq:cost:mlmc:new} is satisfied.
The following order, with $\eta = 1$, is achieved
by algorithms $Q_k^{r}(\MM)$ with suitably chosen parameters:
\begin{align}
\label{z1}
q \leq \beta_4/\beta_5
\quad&\Rightarrow\quad
\gamma = 2+ \frac{q}{r+1},\\
\label{z2}
q > \beta_4/\beta_5 \ \wedge\  \beta_5 > 1 
\quad&\Rightarrow\quad
\gamma = 2+ \frac{\beta_4/\beta_5}{r+1},\\
\label{z4}
q > \beta_4/\beta_5 \ \wedge\  \beta_5 < 1 
\quad&\Rightarrow\quad
\gamma = 2+ 
\frac{\beta_4+(1-\beta_5)\cdot q}{r+1}. 
\intertext{Moreover, with $\eta =3$,}
\label{z5}
q > \beta_4 \ \wedge\  \beta_5 = 1
\quad&\Rightarrow\quad
\gamma = 2 + \frac{\beta_4}{r+1}. 
\end{align}
\end{theorem}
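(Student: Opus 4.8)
The plan is to run the classical multilevel complexity argument through the error decomposition \eqref{err:dec:num} and the cost bound \eqref{eq:cost:mlmc:new}. Given a target accuracy $\eps>0$, allocate a fixed fraction of $\eps$ to each of $e_1$, $e_2$ and $(e_3^2+e_4)^{1/2}$. First choose the number of knots $k\asymp\eps^{-1/(r+1)}$, so that (E3) gives $e_1\lesssim\eps$, and the smoothing parameter $\delta\asymp\eps^{1/(r+1)}$, so that Lemma \ref{l1} gives $e_2\lesssim\eps$; note that then $k\delta\asymp 1$ and $\log(k+1)\asymp\log(1/\eps)$, so that the additive interpolation cost $k$ and the smoothing cost $N_\ell\,k\delta$ in \eqref{eq:cost:mlmc:new} are harmless. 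Next choose the finest level $L_1$ as the smallest one for which (A3) yields $e_3\lesssim\eps$; after substituting $\delta\asymp\eps^{1/(r+1)}$ the two thresholds on $M^{L_1}$ coming from the two factors in the minimum in (A3) are $\eps^{-(r+1+\alpha_1)/((r+1)\alpha_2)}$ and $\eps^{-1/\alpha_3}$, so that $M^{L_1}\asymp\eps^{-q/(r+1)}$ with $q$ exactly as in Section \ref{s3.2}.

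It remains to choose the coarsest level $L_0$ and the replication numbers $N_{L_0},\dots,N_{L_1}$ so that $e_4=\V(\MM)\lesssim\eps^2$ at minimal cost. Combining \eqref{eq301} with Lemma \ref{l2-xx} (and with boundedness of $g$ on the coarsest level) gives $\V(\MM)\lesssim\log(k+1)\sum_{\ell=L_0}^{L_1}V_\ell/N_\ell$ with per-level variances $V_\ell\lesssim\min(\delta^{-\beta_4}M^{-\ell\beta_5},1)$ and per-level costs of order $M^\ell$. The usual Lagrange optimisation — minimise $\sum_\ell N_\ell M^\ell$ subject to $\sum_\ell V_\ell/N_\ell\le\eps^2/\log(k+1)$, leading to $N_\ell\asymp\sqrt{V_\ell/M^\ell}$ — produces a total cost of order $\frac{\log(1/\eps)}{\eps^2}\bigl(\sum_{\ell=L_0}^{L_1}\sqrt{V_\ell M^\ell}\bigr)^2$, up to the additive terms $k$ and $M^{L_1}$, the latter also absorbing the rounding of the $N_\ell$ to integers; a short separate check shows these to be of lower order than the main term in each of the four regimes. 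Everything thus reduces to estimating the geometric-type sum $\Sigma:=\sum_{\ell=L_0}^{L_1}\sqrt{V_\ell M^\ell}$.

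The case split is dictated by comparing $q$ with $\beta_4/\beta_5$ and $\beta_5$ with $1$. If $q\le\beta_4/\beta_5$, then $M^{L_1}\asymp\eps^{-q/(r+1)}\lesssim\delta^{-\beta_4/\beta_5}$, so on every active level the trivial bound $V_\ell\lesssim 1$ is binding and there is no multilevel gain; taking for instance $L_0=L_1$ gives $\Sigma\lesssim M^{L_1/2}$, hence $\cost\lesssim\log(1/\eps)\,\eps^{-2-q/(r+1)}$, which is \eqref{z1}. If $q>\beta_4/\beta_5$, choose $L_0$ with $M^{L_0}\asymp\max(1,\delta^{-\beta_4/\beta_5})$ — admissible, and $L_0<L_1$ for small $\eps$ exactly because $\beta_4/\beta_5<q$ — so that $V_\ell\lesssim\delta^{-\beta_4}M^{-\ell\beta_5}$ and $\sqrt{V_\ell M^\ell}\lesssim\delta^{-\beta_4/2}M^{\ell(1-\beta_5)/2}$ on all active levels. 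For $\beta_5>1$ this summand decays geometrically, $\Sigma$ is governed by the $\ell=L_0$ term, $\Sigma^2\lesssim\delta^{-\beta_4/\beta_5}\asymp\eps^{-(\beta_4/\beta_5)/(r+1)}$, which is \eqref{z2}; for $\beta_5<1$ it grows geometrically, $\Sigma$ is governed by the $\ell=L_1$ term, $\Sigma^2\lesssim\delta^{-\beta_4}M^{L_1(1-\beta_5)}\asymp\eps^{-(\beta_4+(1-\beta_5)q)/(r+1)}$, which is \eqref{z4}; and for $\beta_5=1$ the summand is constant in $\ell$, so $\Sigma\lesssim(L_1-L_0)\,\delta^{-\beta_4/2}$ with $L_1-L_0\asymp\log(1/\eps)$, and the extra factor $(L_1-L_0)^2$ turns the single logarithm coming from \eqref{eq301} into $\eta=3$, which is \eqref{z5}. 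In every case $\err(Q_k^{r}(\MM))\lesssim\eps$ and $\cost(Q_k^{r}(\MM))\lesssim\eps^{-\gamma}(\log(1/\eps))^{\eta}$; letting $\eps=\eps_n\to 0$ and re-expressing $\eps$ through the error (using $\err\lesssim\eps$, so that $-\log\err\asymp\log(1/\eps)$ up to an additive constant) yields the stated order $(\gamma,\eta)$.

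The bulk of the work — and the only genuinely non-mechanical points — should be: the choice of $L_0$ that moves every active level into the regime where the strong-error bound of Lemma \ref{l2-xx} beats the trivial bound $1$, together with the attendant dichotomy $q\gtrless\beta_4/\beta_5$; the verification that the unavoidable additive contributions in \eqref{eq:cost:mlmc:new} (the interpolation term $k$, the finest-level-plus-integer-rounding term $M^{L_1}$, and the smoothing term $N_\ell\,k\delta$) stay of lower order than the main MLMC cost in each regime; and the borderline $\beta_5=1$ accounting that produces $\eta=3$.
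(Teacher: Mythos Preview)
Your proposal is correct and follows essentially the same route as the paper's proof: the same choices $\delta\asymp\eps^{1/(r+1)}$, $k\asymp\eps^{-1/(r+1)}$, $M^{L_1}\asymp\eps^{-q/(r+1)}$, the same Lagrange optimisation of the $N_\ell$, the same coarsest-level choice $M^{L_0}\asymp\delta^{-\beta_4/\beta_5}$ (the paper's $L^\dag$), and the same case split on $q\gtrless\beta_4/\beta_5$ and $\beta_5\gtrless 1$. The only cosmetic difference is that the paper fixes $N_{L_0}=\eps^{-2}\log\eps^{-1}$ separately via the trivial bound $V_{L_0}\lesssim 1$ and then optimises over the remaining levels, which makes the cost take the explicit form $\eps^{-2}\log\eps^{-1}\,(M^{L_0}+G^2(L_0))$ and renders the optimisation over $L_0$ slightly more transparent; you fold the coarsest level into the Lagrange step, which is equivalent.
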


The proof of this result, which also includes the choice of the
parameters of the multilevel algorithm,
follows the one presented in 
\cite{giles2014multi} and can be seen in Section \ref{SM:prt2} in 
the Supplementary Materials.

Theorem \ref{t2} improves \cite[Thm.~2.6]{giles2014multi},
if \eqref{eq:cost:mlmc:new} is satisfied. This improved cost bound
also leads to improved versions of 
\cite[Thm.~3.3, Thm.~4.3]{giles2014multi}, which deal with the
approximation of densities on compact intervals and distribution
functions at a single point, respectively.

\begin{exmp}
Suppose that $Y = \varphi(X)$, where $X$ is a sufficiently smooth
Gaussian process (or random field) on a compact domain and
$\varphi$ is a Lipschitz continuous real-valued functional
with respect to the supremum
norm, say, the supremum norm itself. Using appropriate 
approximations $X^{(\ell)}$ of $X$ and putting
$Y^{(\ell)}=\varphi(X^{(\ell)})$ the estimate (5.1) in
\cite{giles2014multi} is satisfied with a large value of $\beta$
due to the smoothness of $X$.
It what follows, $\varepsilon > 0$ may be chosen arbitrarily small.

We obtain $\beta_4=2$ and $\beta_5=\beta$ as well as
$\alpha_1 = \varepsilon$, $\alpha_2=\beta/2$, and $\alpha_3 = \beta/2 -
\varepsilon$, see \cite[Sec.~5]{giles2014multi}. 
This leads to $q = 2(r+1)/\beta + \eps$, while $\beta_4/\beta_5 =
2/\beta$. Hence \eqref{z2} yields 
\[
\gamma = 2 + \frac{2}{\beta \cdot (r+1)}.
\]
Since $\max(1,\beta_4/\beta_5) = 1$, the case (2.10) in
\cite[Thm.~2.6]{giles2014multi} 
only yields
\[
\gamma = 2 + \frac{1}{r+1}.
\]
\end{exmp}

\begin{remark}
In the limit $r \to \infty$ we obtain
$\gamma = 2+\max(1-\beta_5,0)/\alpha_2$ in Theorem \ref{t2},
i.e., we recover the order of convergence for the
standard MLMC application, namely, the approximation of
expectations.
\end{remark}

\section{Sketch of the Adaptive MLMC Algorithm}\label{sec:algorithm}

We present an MLMC algorithm, which assumes that the parameters for the 
weak and the strong convergence are unknown and have to be
substituted by suitable estimates during the algorithm run. 
Based on these estimates we determine the replication numbers,
the range of levels, the smoothing parameter, and the number of
interpolation points adaptively. For simplicity, the minimal level
is chosen as $L_0=0$, and we use $L=L_1$ to denote the maximal
level.

We do not address the issue how to detect the smoothness $r$ of the 
density $\rho$ from simulation data, and how to choose an
interpolation scheme that exploits the smoothness in an optimal
way. Instead, we assume a known $r \geq 1$,
and we take a function $g$ with the properties (S1)--(S4).
Furthermore, we use piecewise polynomial
interpolation of degree $r$ at equidistant points.
The number $k_n$ of interpolation points is given by
\[
k_n = \lceil 2^n/r\rceil \cdot r + 1,
\]
where $n \in \N$ with $2^{n+1} > r$,
and the points themselves are given by
\[
s_j = s_{j,n} = S_0  + (j-1) \cdot (S_1-S_0)/(k_n-1)
\]
for $j=1,\dots,k_n$.
We use
\[
Q_n=Q^r_{k_n}
\]
to denote the piecewise polynomial interpolation of degree
$r$ at these points, together with the monotonicity corrections as 
described in Remark \ref{r3a}, and we put 
\[
F_n = (F(s_{1}),\dots,F(s_{k_n})).
\]
Obviously, (E1) and (E2) are satisfied, and the Lipschitz
constant $\|Q_n\|$ of $Q_n$ does not
depend on $n$. In the numerical experiments we take $r=3$ and $g$ 
according to Remark \ref{r2}.

The smoothing parameter $\delta$ is chosen from the discrete set of values 
\[
\delta_m = 
(S_1-S_0)/ 2^{m},
\]
where $m \in \N$. 
With a slight abuse of notation we put
\[
g^{n,m} = g^{k_n,\delta_m}.
\]

For a given $\Eps >0$ we wish to select the parameters
of the MLMC algorithm such that its error is at most $\Eps$
and its cost is as small as possible.
Our approach to the selection of the replication numbers and of 
the maximal level follows \cite{giles15}, 
who studies MLMC algorithms 
with values in $\R$. The latter is adapted to the present case of 
vector-valued algorithms and extended to also handle the
selection of the smoothing parameter and the number of
interpolation points.

In order to achieve 
\begin{equation}\label{eq222a}
\err(Q_n(\M)) \leq \Eps
\end{equation}
we have to assign certain proportions of $\Eps$ to the four sources of 
the error, which have been introduced in Section \ref{err:dec}.
The analysis presented in Theorem \ref{t2} yields an 
asymptotically optimal choice of the parameters of the
multilevel algorithm, in which case the
cost is asymptotically bounded by
$\sum_{\ell=0}^{L} N_{\ell}\cdot M^\ell$, see \eqref{f7},
and $k \cdot \delta$ is of the order one, see \eqref{f9a} and
\eqref{f9b}.
This suggests assigning only a small part of $\Eps$ to the
interpolation error $e_1$ and to the smoothing error $e_2$.
While $k$ only has an impact on the error and cost of 
$\M$ via a factor of order $\log(k)$, a small value of $\delta$ 
might harm the decay of the bias and the variances, see \eqref{g99}. 
Accordingly, we aim at $e_1$ being smaller than $e_2$.
Specifically we wish
to choose the parameters of our algorithm such
that
\begin{equation}\label{eq222}
e_1
\le \|Q_n\| \cdot \Eps_*, \quad
e_2
\le 4 \cdot \Eps_*, \quad
e_3
\le 16 \cdot \Eps_*, \quad
e_4 \le 256 \cdot \Eps_*^2,
\end{equation}
where
\[
\Eps_* = \frac{\Eps}{37\cdot\|Q_n\|}.
\]
By \eqref{err:dec:num} we get \eqref{eq222a}, if
\eqref{eq222} holds true. 
Recall that Remark \ref{r3} provides the explicit value
$\|Q_n\|\approx 1.63$ in the case $r=3$.

It is possible that a different assignment of $\Eps$ to the four
sources of the error may lead to better results,
numerically. Concerning the upper bounds \eqref{eq:cost:mlmc:new} 
and \eqref{g99} of the cost and the error, however, an 
improvement is possible at most by a multiplicative constant,
which does not depend on $\Eps$.
In the standard setting of MLMC to compute expectations, 
this assignment problem is further analyzed and a new
algorithm is constructed in \cite{collier_et_al2015}.

The present stage of the MLMC algorithm is defined, in particular,
by the parameter values $n$ and $m$ for interpolation and
smoothing, and the values of the maximal level $L$ and of the
replication numbers $N_\ell$. We always have
$L \geq 2$ and $N_\ell \geq 100$ for $\ell=0,\dots,L$. 
By the latter we ensure a reasonable accuracy in certain estimates
to be introduced below.
We use $y_{i,0}$ to denote samples of the random variable $Y^{(0)}$
and $(y_{i,\ell},y_{i,\ell-1})$ to denote samples of the random vector
$(Y^{(\ell)},Y^{(\ell-1)})$ for $\ell=1,\dots,L$. During the computation
the values of $n$, $m$, $L$, and $N_\ell$ 
are updated and all samples are stored.

\subsection{Assumptions}\label{s55}

For sequences of real numbers $u_\ell$ and positive real numbers
$w_\ell$ we write $u_\ell \approx w_\ell$, if
\[
\lim_{\ell \to \infty} u_{\ell}/ w_{\ell} = 1,
\]
and $u_\ell \ls w_\ell$, if
\[
\limsup_{\ell \to \infty} u_{\ell}/ w_{\ell} \leq 1.
\]

Assumption (A2) on the computational cost of simulating
the joint distribution of $Y^{(\ell)}$ and $Y^{(\ell-1)}$ is
assumed to hold. 
Assumption (A3) on the weak convergence, which is used in our 
asymptotic analysis, is replaced as follows.
For every $n$ and $m$, we suppose that there exists $c,\alpha > 0$
such that
\begin{equation}\label{eq206}
|\Exp (g^{n,m} (Y^{(\ell)})) -\Exp (g^{n,m} (Y^{(\ell-1)})) |_\infty
\approx c \cdot M^{-\ell\cdot\alpha}.
\end{equation}
Furthermore, we assume that 
\begin{equation}\label{eq206b}
\lim_{\ell \to \infty} \Exp (g^{n,m} (Y^{(\ell)})) = 
\Exp (g^{n,m} (Y)).
\end{equation}
This yields the asymptotic upper bound
\begin{align}\label{eq206a}
&\bigl|\Exp (g^{n,m} (Y)) -\Exp (g^{n,m} (Y^{(\ell)})) \bigr|_\infty 
\notag \\
& \qquad  \ls (M^\alpha-1)^{-1} \cdot
\bigl|\Exp (g^{n,m} (Y^{(\ell)})) -\Exp (g^{n,m} (Y^{(\ell-1)}))
\bigr|_\infty
\end{align}
for the bias at level $\ell$.
In contrast to our asymptotic analysis, which makes
use of (A4), the construction of the adaptive MLMC
algorithm is not based on any assumption on the strong convergence.

Put
\[
C_r = 2^{r+1}.
\]
For every $n$, we suppose that there exists $c >0$ such that
\begin{equation}\label{eq211}
\bigl|\Exp (g^{n,m} (Y)) -\Exp (g^{n,m-1} (Y))
\bigr|_\infty
\approx c \cdot \delta_m^{r+1}.
\end{equation}
This yields the asymptotic upper bound 
\begin{equation}\label{eq211a}
\bigl|F_n -\Exp (g^{n,m} (Y)) \bigr|_\infty
\ls (C_r-1)^{-1} \cdot
\bigl|\Exp (g^{n,m} (Y)) -\Exp (g^{n,m-1} (Y))\bigr|_\infty
\end{equation}
for the smoothing error with parameter $\delta_m$.

We suppose that there exists $c >0$ 
such that
\begin{equation}\label{eq220}
\| Q_n(F_n) - Q_{n-1}(F_{n-1})\|_\infty
\approx c \cdot C_r^{-n}.
\end{equation}
This yields the asymptotic upper bound 
\begin{equation}\label{eq220a}
\| F - Q_n(F_n)\|_\infty
\ls (C_r-1)^{-1} \cdot
\| Q_n(F_n) - Q_{n-1}(F_{n-1})\|_\infty
\end{equation}
for the interpolation error with $k_n$ equidistant points. 

Formally, (A1) is not assumed to hold for the chosen value of $r$,
but of course the convergence order $r+1$ in the assumptions 
\eqref{eq211} and \eqref{eq220} corresponds to 
$\rho$ being at least $r$ times continuously differentiable.

\begin{remark}
Let $[z]_i$ denote the $i$-th component of $z \in \R^k$.
In the context of SDEs weak error results of the form
\begin{equation}\label{eq207}
\bigl[\Exp (g^{n,m} (Y)) -\Exp (g^{n,m} (Y^{(\ell)}))\bigl]_i
\approx c_i \cdot M^{-\ell \cdot \alpha} 
\end{equation}
with $c_i \neq 0$ and $\alpha > 0$
are known to hold for fixed $n$ and $m$ and all $i=1,\dots,k_n$ under 
suitable assumptions. Note that \eqref{eq207} implies \eqref{eq206b} 
as well as \eqref{eq206} with
\[
c = (M^\alpha -1) \cdot |(c_1,\dots,c_{k_n})|_\infty.
\]

Suppose that (A1) is satisfied for the chosen value of $r$ and that
$\rho^{(r)} (s_j) \neq 0$ for $j=1,\dots,k_n$ as well as 
\[
\int_{-1}^1 u^r \cdot (1_{\left]-\infty,0\right]} (u) - g(u)) \, du 
\neq 0.
\]
From the proof of Lemma \ref{l1} we get 
\begin{equation}\label{eq210}
\bigl[F_n - \Exp (g^{n,m} (Y))  \bigr]_i
\approx c_i \cdot \delta_m^{r+1} 
\end{equation}
with $c_i \neq 0$
for fixed $n$ and all $i=1,\dots,k_n$.
Note that \eqref{eq210} implies \eqref{eq211} with
\[
c = (C_r -1) \cdot |(c_1,\dots,c_{k_n})|_\infty.
\]
\end{remark}

Roughly speaking, our MLMC algorithm is based on the following
heuristics: the asymptotic bounds \eqref{eq206a}, \eqref{eq211a},
and \eqref{eq220a} are replaced by the corresponding inequalities,
and estimators for means and variances are assumed to be nearly exact.

\subsection{Variance Estimation and Selection of the Replication
Numbers}\label{ve}

The innermost loop of the algorithm performs, in particular,
a variance estimation with fixed parameters $n$, $m$, and $L$. 
It is important for the overall performance of the adaptive algorithm
that the cost for the variance estimation is of the order 
$O(k_n + c(N_0,\dots,N_L))$, where
\[
c(n_0,\dots,n_L) = 
\sum_{\ell=0}^{L} n_\ell \cdot (M^\ell + k_n \cdot \delta_m),
\]
cf.\ \eqref{eq:cost:mlmc:new}.  
We stress that there is no such constraint needed in the standard
MLMC application, which only involves expectations and variances
of real-valued random variables.

At first we estimate the expectation and the variance of the 
random vectors $g^{n,m}(Y^{(0)})$ and 
$g^{n,m}(Y^{(\ell)}) - g^{n,m}(Y^{(\ell-1)})$ for $\ell = 1, \dots, L$. 
To estimate the expectations we employ
\[
\hat{b}_0 = \frac{1}{N_0} \cdot \sum_{i=1}^{N_0} 
g^{n,m}(y_{i,0})
\]
and
\begin{equation}\label{eq205}
\hat{b}_\ell = \frac{1}{N_\ell} \cdot \sum_{i=1}^{N_\ell} 
(g^{n,m}(y_{i,\ell}) - g^{n,m}(y_{i,\ell-1})).
\end{equation}
As we have shown in Section \ref{s2.4},
the total cost for this step does not exceed the cost
bound $O(k_n + c(N_0,\dots,N_L))$.
To estimate the variances 
we cannot afford to use the whole data set.
Instead, we only use 
\[
N_\ell^\prime = \min\bigl(N_\ell, \max(\zeta, N_\ell \cdot
M^\ell)/k_n\bigr)
\]
samples on level $\ell$, where
\[
\zeta = \frac{1}{L} \cdot (k_n+ c(N_0,\dots,N_L)).
\] 
Accordingly, the variances are estimated by 
\begin{equation}\label{eq246}
\hat{v}_0 = \frac{1}{N^\prime_0} \cdot \sum_{i=1}^{N^\prime_0} 
|g^{n,m}(y_{i,0}) - \hat{b}_0|_\infty^2
\end{equation}
and
\begin{equation}\label{eq245}
\hat{v}_\ell = \frac{1}{N^\prime_\ell} \cdot \sum_{i=1}^{N^\prime_\ell} 
|g^{n,m}(y_{i,\ell}) - g^{n,m}(y_{i,\ell-1})
 - \hat{b}_\ell|_\infty^2.
\end{equation}
Since $\hat{v}_\ell$ can be computed at cost
$O(\max(\zeta,N_\ell\cdot M^\ell))$
for $\ell=0,\dots,L$, the total cost for this step 
also stays within the cost bound 
$O(k_n + c(N_0,\dots,N_L))$.
Obviously $\zeta \geq N_0/L$, and $N_0/L$ is of
the order $\Eps^{-2}$, while $k_n$ is of the order
$\Eps^{-1/(r+1)}$, cf.\ \eqref{f9b}, \eqref{f9d}, and
\eqref{f9c}. Hence $\zeta/k_n$ tends to infinity as $\Eps \to 0$.
On the other hand, $M^L$ is of the order $\Eps^{-q/(r+1)}$, 
and therefore $M^L/k_n$ tends to infinity as $\Eps \to 0$ if $q >
1$.

With $c(k)$ given by \eqref{eq202} from the Supplementary Materials, 
\[
\hat{v}(n_0,\dots,n_L) = c(k_n) \cdot 
\sum_{\ell=0}^L \frac{1}{n_\ell} \cdot \hat{v}_\ell 
\]
serves as an empirical upper bound for the variance of
the MLMC algorithm with any choice of replication numbers $n_\ell$.
If, for the present choice of replication numbers, 
this bound is too large
compared to the upper bound for $\V(\M)$ in \eqref{eq222},
i.e., if the variance constraint
\begin{equation}\label{eq203}
\hat{v}(N_0,\dots,N_{L}) 
\leq 
256 \cdot \Eps_*^2
\end{equation}
is violated,
we determine new values of $N_0,\dots,N_L$ by minimizing 
$c(n_0,\dots,n_L)$ subject to the constraint
$\hat{v}(n_0,\dots,n_{L}) \leq 256 \cdot \Eps_*^2$
This leads to
\begin{equation}\label{eq247}
n_\ell
= \frac{\hat{v}_\ell^{1/2}}{(M^\ell + k_n \cdot \delta_m)^{1/2}}
\cdot \sum_{\ell=0}^L 
\left( \hat{v}_\ell \cdot (M^\ell + k_n \cdot
\delta_m)\right)^{1/2} \cdot 
\frac{c(k_n)}{256}\cdot \Eps_*^{-2},
\end{equation}
and extra samples of $Y^{(0)}$ and
$(Y^{(\ell)},Y^{(\ell-1)})$ have to be generated accordingly.
Note that the updated estimates for the expectations and variances can be 
computed within the updated cost bound.

\subsection{Bias Estimation and Selection of the Maximal Level}\label{be}

For fixed $n$ and $m$,
we wish to determine the smallest value of $L$ such that 
\[
\bigl|\Exp (g^{n,m} (Y^{(L)})) -\Exp (g^{n,m} (Y^{(L-1)})) \bigr|_\infty
\leq 16 \cdot (M^\alpha-1) \cdot \Eps_*
\]
is satisfied, which corresponds 
to the upper bound for $e_3$ in \eqref{eq222} together
with \eqref{eq206a}. Initially we try $L=2$. 

For $|\Exp (g^{n,m} (Y^{(\ell)})) -\Exp
(g^{n,m} (Y^{(\ell-1)})) |_\infty$
the estimate $|\hat{b}_\ell|_\infty$ is 
available on the levels $\ell=1,\dots,L$, see \eqref{eq205}.
To ensure a reasonable accuracy in these estimates
we only consider those levels, 
where the replication number $N_\ell$ exceeds a certain threshold,
and we let ${\cal L }$ denote the corresponding subset of
$\{1,\dots,L\}$; in the numerical
experiments we choose $10^4$ as the threshold value.
We estimate $\alpha$ and $c$ in \eqref{eq206} by a least-squares fit,
i.e., we take $\hat{\alpha}$ and $\hat{c}$ to minimize 
\begin{equation}
\label{eq:regression}
(\alpha,c) \mapsto
\sum_{\ell\in\cal{L}} 
\left( \log |\hat{b}_{\ell}|_\infty + 
\ell \cdot \alpha \, \log M+ \log c \right)^2.
\end{equation}

While the value of $\hat{c}$ is irrelevant, 
an upper bound for the norm of
$\Exp (g^{n,m} (Y^{(L)})) -\Exp
(g^{n,m} (Y^{(L-1)}))$
is given by
$|\hat{b}_L|_\infty$, 
or, more generally, by
$M^{(\ell-L) \cdot \hat{\alpha}} 
\cdot |\hat{b}_\ell|_\infty$ with $\ell \leq L$.
Hence we put
\begin{equation}\label{eq238}
\hat{B}_2 =
\max \bigl(
|\hat{b}_2|_\infty,
|\hat{b}_{1}|_\infty/M^{\hat{\alpha}}
\bigr)
\end{equation}
for $L=2$ and
\begin{equation}\label{eq239}
\hat{B}_L =
\max \bigl(
|\hat{b}_L|_\infty,
|\hat{b}_{L-1}|_\infty/M^{\hat{\alpha}},
|\hat{b}_{L-2}|_\infty/M^{2 \hat{\alpha}}
\bigr)
\end{equation}
for $L \geq 3$.

The present value of $L$ is accepted as the maximal level, 
if 
the bias constraint
\begin{equation}
\label{bias:stop}
\hat{B}_L 
\leq 
16 \cdot (M^{\hat{\alpha}}-1) \cdot \Eps_*
\end{equation}
is satisfied.
Otherwise, $L$ is increased by one, and new 
samples will be generated. As already mentioned,
we take $N_L=100$ as a default value.

In our simulations we will ignore the cost for the performing the 
regression \eqref{eq:regression}, as its cost is absolutely negligible, 
compared to number of other operations, performed by the algorithm.

\subsection{Selection of the Smoothing Parameter}\label{sp}

Our approach to choose the smoothing parameter
closely follows the approach for the selection of the maximal level,
as we consider two values, $\delta_m$ and $\delta_{m-1}$, of the
smoothing parameter at the same time. 

The parameter $n$, which determines the number $k_n$ of interpolation
points, is fixed.
We wish to determine the smallest value of $m$, i.e., the largest
value of $\delta_m$, such that 
\[
\bigl|\Exp (g^{n,m} (Y)) -\Exp (g^{n,m-1} (Y)) \bigr|_\infty
\leq 
4 \cdot (C_r-1) \cdot \Eps_*
\]
is satisfied, which corresponds to 
the upper bound for $e_2$ in \eqref{eq222} together with \eqref{eq211a}.
Initially we try $m=2$. 
Actually, $Y$ is approximated by $Y^{(L)}$,
and an upper bound for 
$|\Exp (g^{n,m} (Y^{(L)})) -
\Exp (g^{n,m-1} (Y^{(L)}))|_\infty$ 
is given by
\begin{equation}\label{eq234}
\hat{s} = 
\Bigl| \frac{1}{N_L} \cdot \sum_{i=1}^{N_L} 
(g^{n,m} (y_{i,L}) - g^{n,m-1} (y_{i,L}))\Bigr|_\infty.
\end{equation}
The present value of $\delta_m$ is accepted as the smoothing
parameter, if the smoothing constraint
\begin{equation}\label{eq212}
\hat{s} \leq 4 \cdot (C_r-1) \cdot \Eps_*
\end{equation}
is satisfied.
Otherwise, $m$ is increased by one. Due to the update for the smoothing 
parameter $\delta$, we need to update all the estimates for the bias and the variance, which increases the overall cost of the algorithm by 
$O(k_n +\max(k_n\cdot \delta_m,1)\cdot\sum_{\ell=0}^L N_\ell)$.

Alternatively, the explicit
error bound from Remark \ref{r1} may be used to select the
smoothing parameter, if an upper bound or reliable estimate
for $|\rho^{(3)}|$ is available.

\subsection{Selection of the Number of Interpolation Points}\label{ip}

The procedure to choose the number $k_n$ of interpolation points
mimics our approach to choose the smoothing parameter, i.e.,
we consider two interpolation schemes, $Q_n$ and $Q_{n-1}$, at the same
time. 

We wish to determine the smallest value of $n$, i.e., the smallest
number $k_n$ of interpolation points, such that
\[
\| Q_n(F_n) - Q_{n-1}(F_{n-1})\|_\infty \leq 
\|Q_n\| \cdot (C_r-1) \cdot \Eps_*,
\]
which corresponds to the upper bound for $e_1$ in \eqref{eq222}
together with \eqref{eq220a}.
Initially we try $n=2$.
Actually, $F_n$ and $F_{n-1}$ are approximated by 
$\Exp (g^{n,m} (Y^{(L)}))$ and $\Exp (g^{n-1,m} (Y^{(L)}))$,
respectively, and an upper bound
for the norm of
$Q_n(\Exp (g^{n,m} (Y^{(L)}))) -Q_{n-1}(\Exp (g^{n-1,m} (Y^{(L)})))$
is given by
\begin{equation}\label{eq233}
\hat{i} = 
\Bigl\| Q_n \Bigl(
\frac{1}{N_L} \cdot \sum_{i=1}^{N_L} g^{n,m} (y_{i,L})\Bigr) - 
Q_{n-1} \Bigl(
\frac{1}{N_L} \cdot \sum_{i=1}^{N_L} g^{n-1,m} (y_{i,L})\Bigr) 
\Bigr\|_\infty.
\end{equation}
The present value $k_n$ is accepted as the number of
interpolation points, if the interpolation constraint 
\begin{equation}\label{eq221}
\hat{i} \leq 
\|Q_n\| \cdot (C_r-1) \cdot \Eps_*
\end{equation}
is satisfied.
Otherwise, $n$ is increased by one. Again, as in Section \ref{sp}, we 
need to include the cost for evaluating the estimator at new points, 
and the added cost is of order 
$O(k_n +\max(k_n\cdot \delta_m,1)\cdot\sum_{\ell=0}^L N_\ell)$.

Alternatively, the explicit error bound from
Remark \ref{r3} may be used to select the
number of interpolation points, if an upper bound or reliable estimate
for $|\rho^{(3)}|$ is available.

\subsection{The Algorithm}\label{alg}

The desired accuracy $\Eps$ is the input to our MLMC algorithm.
\bigskip

\begin{itemize}
\item[] $n=1$;
\item[] $m=2$;
\item[] $L=2$;
\item[] $N_0=N_1=N_2=10^2$;
\item[] Generate these numbers of samples of $Y^{(0)}$ and 
$(Y^{(\ell)},Y^{(\ell-1)})$ for $\ell=1,2$;
\item[] Compute $\hat{v}_0,\hat{v}_1,\hat{v}_2$, 
see \eqref{eq246} and \eqref{eq245};
\item[] {\bf repeat} \quad /* interpolation */
\begin{itemize}
\item[] $n = n+1$;
\item[] $m = m-1$;
\item[] {\bf repeat} \quad /* smoothing */
\begin{itemize}
\item[] $m = m+1$;
\item[] $newlevel = {\rm false}$;
\item[] {\bf repeat} \quad /* bias */
\begin{itemize}
\item[] {\bf if} $newlevel$ {\bf then}
\item[] \qquad $L = L+1$;
\item[] \qquad $N_L = 100$;
\item[] \qquad Generate this number of samples of
$(Y^{(L)},Y^{(L-1)})$;
\item[] \qquad Compute $\hat{v}_L$, see \eqref{eq245};
\item[] {\bf endif;}
\item[] {\bf repeat}
\item[] \qquad Compute $n_0,\dots,n_L$, see \eqref{eq247};
\item[] \qquad 
$N_\ell = \max(N_\ell,n_\ell)$ for $\ell=0,\ldots,L$;
\item[] \qquad Generate extra samples of $Y^{(0)}$ and 
$(Y^{(\ell)},Y^{(\ell-1)})$ for $\ell=1,\ldots,L$\\
\mbox{}\qquad%
as needed;
\item[] \qquad Compute $\hat{v}_0,\dots,\hat{v}_L$, see 
\eqref{eq246} and \eqref{eq245};
\item[] {\bf until} the variance constraint \eqref{eq203} is 
satisfied;
\item[] Compute $\hat{\alpha}$, see \eqref{eq:regression},
and $\hat{B}_{L}$, see \eqref{eq238} and \eqref{eq239};
\item[] $newlevel = {\rm true}$;
\end{itemize}
\item[] {\bf until} 
the bias constraint \eqref{bias:stop} is satisfied;
 
\item[] Compute $\hat{s}$, see \eqref{eq234};
\end{itemize}
\item[] {\bf until}
the smoothing constraint \eqref{eq212} is satisfied; 
\item[] Compute $\hat{i}$, see \eqref{eq233};
\end{itemize}
\item[] {\bf until}
the interpolation constraint \eqref{eq221} is satisfied; 
\item[]
Compute $Q_n (\M^{k_n,\delta_m,0,L}_{N_{0}, \dots, N_{L}})$;
\end{itemize}

\bigskip

We comment on the cost for the individual steps of the algorithm
and their contributions to the overall cost.
By assumption (A2), samples of $Y^{(0)}$ or $(Y^{(\ell)},Y^{(\ell-1)})$ 
can be generated at cost $O(1)$ or $O(M^\ell)$, respectively.
For the present values of $L$, $n$, $m$, and
$N_0,\dots,N_L$ the variance estimates $\hat{v}_0,\dots,\hat{v}_L$
can be compute at cost $O(k_n + c(N_0,\dots,N_L))$, see Section
\ref{ve}.
Moreover, these two steps are the dominating ones, i.e., the cost for all 
other steps of the algorithm is negligible. Actually, the whole bias 
loop with terminal values $L$ and $N_0,\dots,N_L$ can be executed at 
cost $O(k_n + c(N_0,\dots,N_L))$, see Section \ref{ve}.
Summing up these quantities over all iterations of the interpolation
and the smoothing loops yields a bound for the overall cost
of the algorithm including an additional cost for the updating the 
smoothing coefficient, see Section \ref{sp}, and number of interpolation 
points, see Section \ref{ip}. In the numerical experiments, which
are presented in the following section, the constant in the
$O$-notation is taken to be one.

\section{Numerical Experiments}\label{s6}
In this section we will apply our adaptive general approach for 
approximating the distribution function, based on the multilevel Monte 
Carlo approach, in the context of stochastic differential equations. We 
would like to point at the fact, that all the previous presentation 
does not assume in any way that we work in the SDE context.

We consider three benchmark problems for a simple, scalar SDE, where the
solutions are known analytically. Our numerical experiments show the 
computational gain in terms of upper bounds, 
achieved by the adaptive multilevel Monte Carlo approach with smoothing
in comparison to a non-adaptive
single-level Monte Carlo approach without smoothing. 
Furthermore, we compare the error of the multilevel algorithm
with the accuracy demand $\Eps$, which serves as an input to
the algorithm.

Consider a geometric Brownian motion $X$, given by 
\begin{equation}
\label{eq:gbm}
\begin{aligned}
\phantom{\qquad t \in [0,T],}
dX_t & = \mu\cdot X_t\, dt + \sigma\cdot X_t\, dW_t, 
\qquad t \in [0,T],\\
X_0 &= 1,
\end{aligned}
\end{equation}
where $W$ denotes a scalar Brownian motion.
For the approximation of $X$ we use the Milstein scheme
with equidistant time-steps $h_\ell=M^{-\ell}\, T$ 
with $M=2$ 
and with piecewise linear interpolation between the interpolation
points.
 
Given $\Eps$, we use the algorithm from Section \ref{alg}, which 
estimates all the necessary parameters on the fly, so no prior knowledge 
of the convergence properties of the discretization scheme is needed. 
The cost of an individual run of
the algorithm is calculated as described in Section \ref{alg} 
and includes 
path generation and variance and bias estimation 
(Sections \ref{ve} and \ref{be}),
functional evaluations due to the updates of the 
smoothing coefficients (Section \ref{sp}) and interpolation points
(Section \ref{ip}). The only costs we do not include, 
since they are negligible, are the 
costs for the regression \eqref{eq:regression} and the 
monotonicity corrections (Remark \ref{r3a}).

The cost as well as the error, i.e., the supremum norm distance between 
the output and the true distribution function $F$, 
for an individual run of the adaptive multilevel algorithm
are random quantities, which depend on $\Eps$. By taking
expectations we get two deterministic quantities that characterize the
performance of the multilevel algorithm. More precisely, we
consider the root mean squared
error, cf.~\eqref{g48}, which will be denoted by
$\err_{\text{ML}}(\Eps)$,
and the expected cost, which will be denoted by $\cost_{\text{ML}}(\Eps)$.

Since $\err_{\text{ML}}(\Eps)$ and
$\cost_{\text{ML}}(\Eps)$ are not known exactly, we employ a simple Monte 
Carlo algorithm with $100$ independent replications for each of 
the values $\Eps = 2^{-i},\ i=3,\ldots,9$. 
The corresponding empirical means are denoted by 
$\hat{e}_{\text{ML}} (\Eps)$ and $\hat{c}_{\text{ML}}(\Eps)$, respectively.

To assess the accuracy of the multilevel algorithm,
$\err_{\text{ML}}(\Eps)$ should be compared with 
the desired accuracy $\Eps$. Our present approach provides control of
the error of the multilevel algorithm
for a given $\Eps$, therefore we aim at
$\hat{e}_{\text{ML}}(\Eps)$ being less than $\Eps$.

To specify the computational gain we need to 
choose the parameters $k_n$, $L$, and $N$
of a single-level Monte Carlo method $\mathcal{S}$ without smoothing
in a fair way.
As we have discussed in Section \ref{err:dec}, if we do not apply 
smoothing, i.e., for $\delta = 0$, we formally have $e_2=0$, which leads 
to 
\[
\err(Q_n(\mathcal{\mathcal{S}})) \leq 
e_1 + 
\sqrt{2} \cdot \|Q_n\| \cdot 
\left(
|(F(s_1), \dots, F(s_k)) -\Exp (\mathcal{S})|^2_\infty+
\V(\mathcal{S})\right)^{1/2},
\]
cf.~\eqref{g69}.
Hence we aim at
\[
e_1 \le \|Q_n\|\cdot\Eps_*, \quad
|(F(s_1), \dots, F(s_k)) -\Exp (\mathcal{S})|_\infty
\le 16\cdot\Eps_*, \quad
\V (\mathcal{S})
\le 256\cdot\Eps_*^2,
\]
where $\Eps_*=\Eps/(33\cdot \|Q_n\|)$ in this case, again with 
$\|Q_n\|\approx 1.63$.
We choose 
\[
k_n = (\|Q_n\|\cdot\Eps_*)^{-1/4},
\]
up to the appropriate rounding,
which corresponds to the assumption that (E3)
holds with a constant $c$ close to one. 
Moreover, based on \eqref{eq301} with $L=L_0=L_1$, with $\log_2 k_n$ 
instead of
$c(k)$, and with the assumption that $\V(g^{k_n,0}(Y^{(L)}))$ is
approximately one, we  take 
$(\log_2 k_n)/(256 \cdot \Eps_*^{2})$
replications, up to integer rounding, in the single-level algorithm.
Finally, we assume that the weak
error, as considered in Remark \ref{r200},
is bounded from above by $h^{\hat{\alpha}}$, where
$h$ denotes the step-size of the Milstein scheme.
The exponent $\hat{\alpha}$ is estimated empirically and
provided to the single-level algorithm,
but the cost for this is not taken into account, since we 
consider
\[
\cost_{\text{SL}} (\Eps) = 
\log_2 k_n\cdot \frac{1}{256\cdot\epsilon_*^2}\cdot 
\left(k_n + (16\cdot\epsilon_*)^{-1/\hat{\alpha}}\right)
\]
as the cost for the single-level Monte Carlo algorithm.

The ratio 
$\cost_{\text{SL}}(\Eps) / \cost_{\text{ML}} (\Eps)$
determines how effective is our approach,
and consequently we will use
$\cost_{\text{SL}}(\Eps) / \hat{c}_{\text{ML}} (\Eps)$
to specify the computational gain.

\subsection{Smooth Path-independent Functionals for SDEs}\label{sec:term}

In this section we set $\mu=0.05$, $\sigma=0.2$, and
$T=1$ in \eqref{eq:gbm}, and we approximate the distribution function 
of $Y = X_T$ 
on the interval $[S_0,S_1] = [0.5,1.5]$. Note that $Y$ is lognormally 
distributed with parameters $\mu-\sigma^2/2$ and $\sigma^2$.

The variance and the mean decay
with respect to the level $\ell$ for different values of $\delta$
and $7$ equidistantly placed points on $[S_0,S_1]$, 
along with the corresponding quantities
of the MLMC algorithm for the indicator function ($\delta=0$), 
are estimated based on $10^6$ samples and presented in the 
top two plots in Figure \ref{fig:europ}. 
The empirical values for the order of convergence
are close to $2$ for the variance if $\delta >0$,
close to $1$ for the variance if $\delta =0$ 
(see dashed reference lines for 
first and for second order decay), and
close to $1$ for the mean for any given $\delta$ 
(see dashed reference line for first order decay). 
We stress that this part of the numerical experiments
is not a part of our adaptive MLMC algorithm, and, of course,
the findings are not provided to the adaptive algorithm.

In the middle left plot in Figure \ref{fig:europ} we compare
the estimate $\hat{e}_{\text{ML}}(\Eps)$ (solid line) for the root mean 
squared error of the multilevel algorithm and the accuracy demand
$\Eps$ (dotted line). 
The computational gain over the single-level algorithm
can be seen in the middle right plot in Figure \ref{fig:europ}.

For the adaptive algorithm the number $k_n$ of interpolation points 
and the smoothing parameter $\delta_m$ are random quantities, which
are updated during the algorithm run and which depend on $\Eps$. 
Empirical means of the final values of $k_n$ and of the reciprocal of 
$\delta_m$, based on $100$ samples,
are presented in the bottom left plot in Figure \ref{fig:europ}. 

Finally, we show the true distribution function
on the interval $[S_0,S_1]$ (dashed line) along with 
two approximations at different accuracies: 
$\Eps = 2^{-3}$ (red line) and $\Eps = 2^{-9}$ 
(green line).

\begin{figure}
        \centering
        \includegraphics[width=\textwidth]{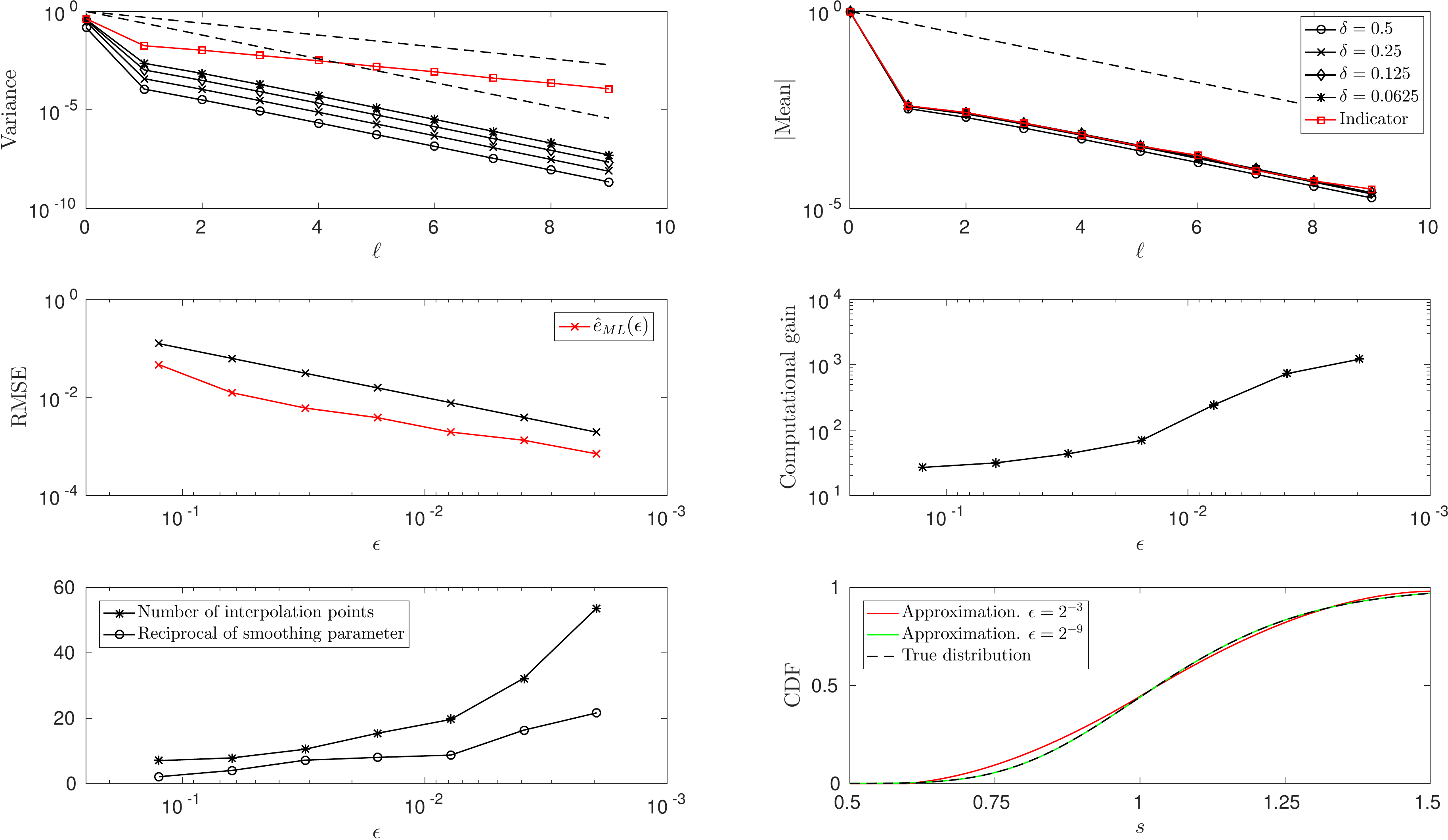}
\caption{Path-independent functional:
variance decay (top left), mean decay (top right), computational gain (middle right), RMSE (middle left), average smoothing coefficient and number of interpolation points (bottom left), true CDF and estimated for $\varepsilon^{-3}$ and $\varepsilon^{-9}$ CDFs.}
\label{fig:europ}
\end{figure}

\subsection{Smooth Path-dependent Functionals for SDEs}\label{sec:dep}

Consider the SDE \eqref{eq:gbm} with parameters $\mu=0.5$,
$\sigma=0.2$, and $T=1$. In this section we 
approximate the distribution function of
$Y = \max_{t\in[0,T]}X_t$
on the interval $[S_0,S_1] = [1.05,2.05]$. We have an explicit 
solution, see \cite{borodin2015handbook}, also in this case, since 
$$
F(s)
=1-\frac12\mathrm{erfc}\left(d_1\right) -\frac12\mathrm{erfc}
\left(d_2\right) s^{2\cdot\mu/\sigma^2-1}$$ 
with
\begin{gather*}
d_1=\frac{\ln(s)-(\mu-\sigma^2/2)\cdot T}
{\sigma\cdot\sqrt{2\cdot T}},\ 
d_2=\frac{\ln(s)+(\mu-\sigma^2/2)\cdot T}
{\sigma\cdot\sqrt{2\cdot T}}.
\end{gather*}

The plots in Figure \ref{fig:barrier} are obtained and
organised in the same way as the plots in Figure \ref{fig:europ}.
The empirical values for the order of convergence
are close to $0.85$ for the variance if $\delta >0$,
close to $0.5$ for the variance if $\delta =0$ (see dashed 
reference line for first order decay), and
close to $0.57$ for the mean for any given $\delta$ (see dashed reference 
line for half order decay). 
The true distribution function is shown on the interval $[S_0,S_1]$ 
along with two different approximations at different accuracies: 
$\Eps = 2^{-3}$ (red line) and $\Eps = 2^{-9}$ 
(green line). 

\begin{figure}
        \centering
        \includegraphics[width=\textwidth]{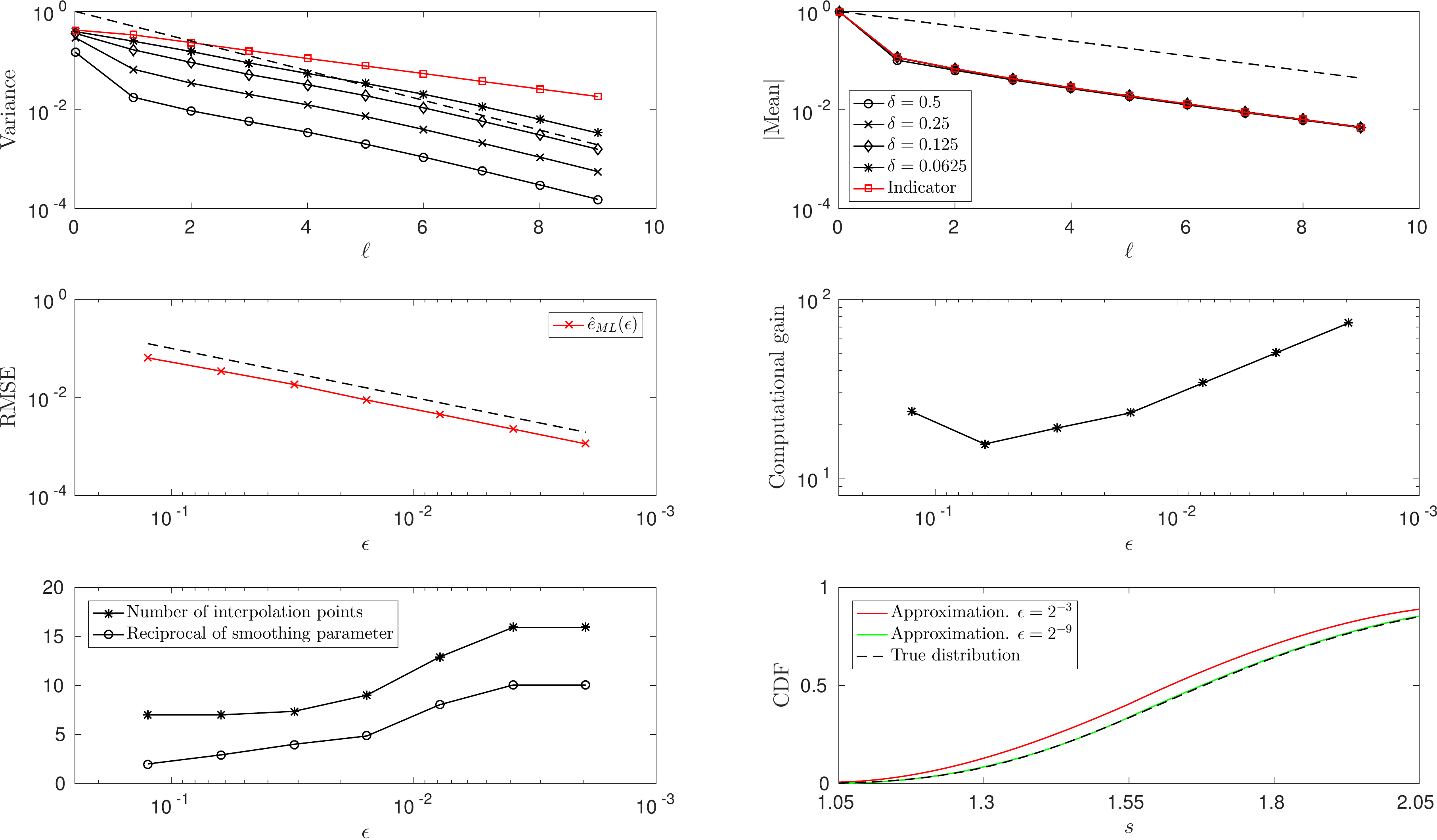}
\caption{Path-dependent functional:
variance decay (top left), mean decay (top right), computational gain (middle right), RMSE (middle left), average smoothing coefficient and 
number of interpolation points (bottom left), true CDF and estimated for $\varepsilon^{-3}$ and $\varepsilon^{-9}$ CDFs.}
\label{fig:barrier}
\end{figure}

\subsection{Exit times}\label{sec:exit}

Now we choose the parameters $\mu=0.01$, $\sigma=0.2$, and $T=2$
in the SDE \eqref{eq:gbm},
and we approximate the distribution function of
\[
Y = \inf \{ t \geq 0 : X_t \le b\} \wedge T))
\]
for $b = 0.95$ on the interval $[S_0,S_1] = [0.25,1.25]$.
The distribution of 
$\inf \{ t \geq 0 : X_t = b\}$ is an inverse Gaussian distribution
with parameters $\ln b/(\mu-\sigma^2/2)$ and $(\ln b)^2/\sigma^2$, see \cite{borodin2015handbook},
and this yields an explicit formula for the distribution function
of $Y$, since $T > S_1$. 

The numerical studies are performed and presented in the same way as in 
Sections \ref{sec:term} or \ref{sec:dep}, except in order to ensure better 
mean and variance decay, we use the distribution function of the minimum of 
the Brownian bridge between the discretization points. In the context of MLMC 
this has been done for the first time in \cite{giles08b} (see also \cite{primozic11}) and we refer to this work 
for coupling and MLMC implementation description.

The empirical values for the order of convergence of the variance
and the mean are both close to $1$ (see dashed reference lines for 
the first order decay) regardless of $\delta$, though the constants are inversely proportional to the value of $\delta$, thus showing the benefits of smoothing.
As before, we show the true distribution function $F$
on the interval $[S_0,S_1]$ along 
with two different approximations at different accuracies: 
$\Eps = 2^{-3}$ (red line) and $\Eps = 2^{-9}$ 
(green line).

\begin{figure}
        \centering
        \includegraphics[width=\textwidth]{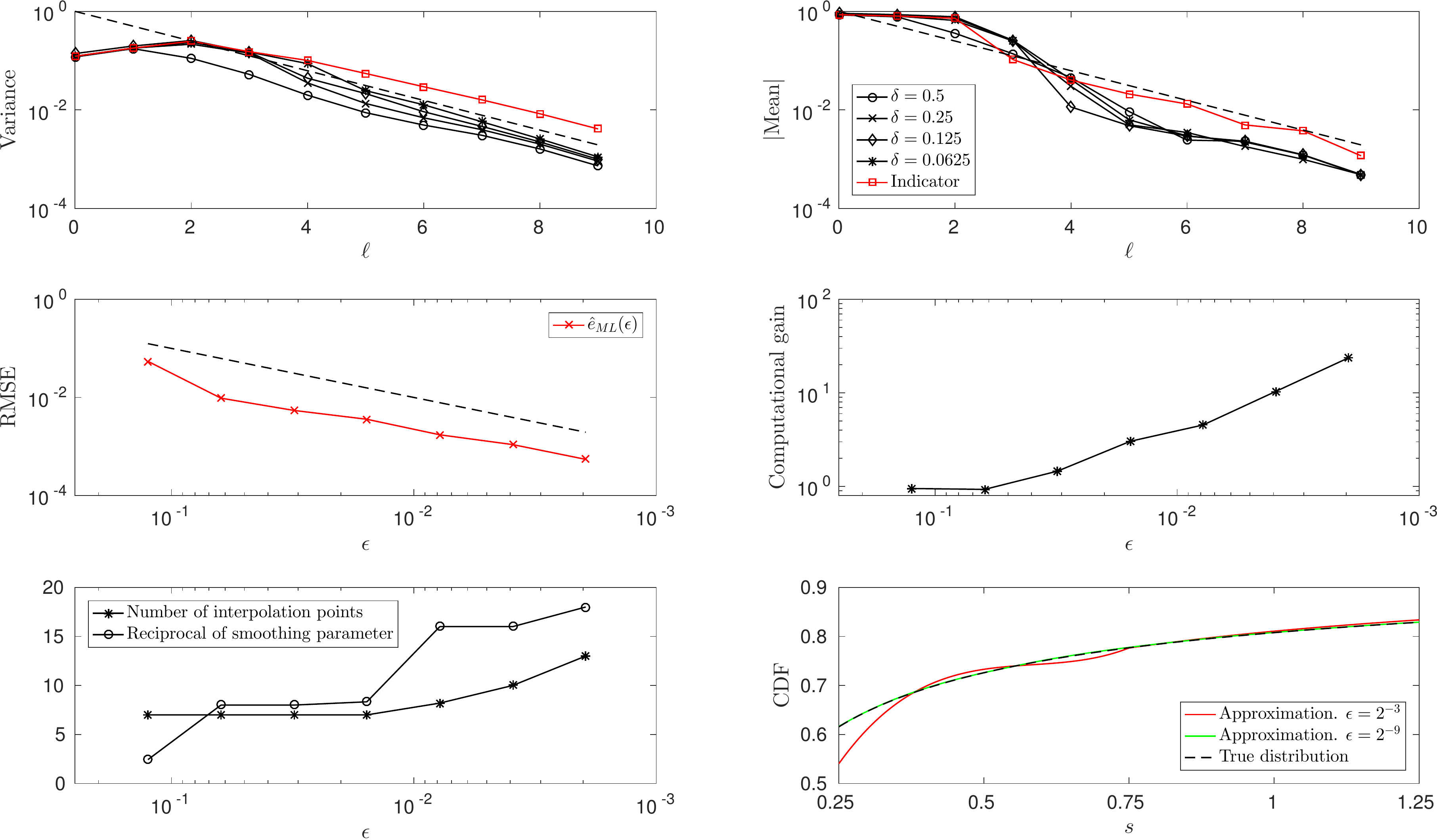}
\caption{Exit time:
variance decay (top left), mean decay (top right), computational gain (middle right), RMSE (middle left), average smoothing coefficient and number of interpolation points (middle left), true CDF and estimated for $\varepsilon^{-2}$ and $\varepsilon^{-6}$ CDFs.}
\label{fig:exit}
\end{figure}

\subsection{Conclusions}

The two most important findings are as follows.
The estimate  $\hat{e}_{\text{ML}}(\Eps)$ for the root mean
squared error of the adaptive MLMC algorithm
is in the range of the desired accuracy $\Eps$ for all
three functionals; 
actually, it is less than $\Eps$ in our experiments. 
For all three functionals the adaptive MLMC algorithm achieves
a substantial computational gain over the single-level algorithm.
For instance, if we ask for accuracy $\Eps=2^{-9}$, then this 
gain is around $840$, $70$ and $25$ times for smooth path-independent, smooth path-dependent and exit time functionals.

We clearly see that a proper choice of $\delta$ 
decreases the variances and consequently leads to 
a computational gain. For the smooth path-independent functional
this effect is very strong, and it is still substantial
for the smooth path-dependent functional. In both cases
smoothing ($\delta > 0$)
improves the empirical order of convergence of the variances,
compared to the indicator function ($\delta=0$).
For the exit time the effect is much weaker, and we have more or 
less the same empirical order of convergence of the variances.
For all three functionals the mean decay 
shows basically no dependence on $\delta$.

For all three functionals the empirical means of the number of
interpolation points and of the reciprocal of the smoothing
parameter, as chosen by our adaptive algorithm, increase similar 
to each other with respect to $\Eps$, which is consistent with the 
assumptions in Sections \ref{s44} and \ref{s55} and with the asymptotic 
analysis, see \eqref{f9a} and \eqref{f9b} in the Supplementary Materials.
The true distribution functions and their approximations with the 
smaller value of $\Eps$ are virtually indistinguishable.

\section*{Acknowledgments}
Mike Giles was partially supported as the Mercator Fellow of
the DFG Research Training Group 1932 ``Stochastic Models
for Innovations in the Engineering Sciences''
at the University of Kaiserslautern.

Tigran Nagapetyan was supported by EPSRC Grant
EP/N000188/1.

Klaus Ritter was partially supported by the ``Center
of Mathematical and Computational Modeling $(\rm{CM})^2$''
at the University of Kaiserslautern.

\nocite{giles08,bc15,bc16}

\bibliographystyle{siamplain}
\bibliography{mybibfile}

\begin{thebibliography}{10}

\bibitem{bc15}
{\sc C.~Bierig and A.~Chernov}, {\em Convergence analysis of multilevel {M}onte
  {C}arlo variance estimators and application for random obstacle problems},
  Numerische Mathematik, 130 (2015), pp.~579--613.

\bibitem{bc16}
{\sc C.~Bierig and A.~Chernov}, {\em Approximation of probability density
  functions by the {M}ultilevel {M}onte {C}arlo {M}aximum {E}ntropy method},
  Journal of Computational Physics, 314 (2016), pp.~661--681.

\bibitem{borodin2015handbook}
{\sc A.~N. Borodin and P.~Salminen}, {\em Handbook of Brownian Motion-Facts and
  Formulae}, Springer Science \& Business Media, 2015.

\bibitem{collier_et_al2015}
{\sc N.~Collier, A.-L. Haji-Ali, F.~Nobile, E.~von Schwerin, and R.~Tempone},
  {\em A continuation multilevel {M}onte {C}arlo algorithm}, BIT Numerical
  Mathematics, 55 (2015), pp.~399--432.

\bibitem{giles08b}
{\sc M.~Giles}, {\em Improved multilevel {M}onte {C}arlo convergence using the
  {M}ilstein scheme}, in Monte Carlo and Quasi-Monte Carlo Methods 2006,
  A.~Keller, S.~Heinrich, and H.~Niederreiter, eds., Springer, 2008,
  pp.~343--358.

\bibitem{giles08}
{\sc M.~Giles}, {\em Multilevel {M}onte {C}arlo path simulation}, Oper.\ Res.,
  56 (2008), pp.~607--617.

\bibitem{giles15}
{\sc M.~B. Giles}, {\em Multilevel {M}onte {C}arlo methods}, Acta Numer., 24
  (2015), pp.~259--328.

\bibitem{giles2014multi}
{\sc M.~B. Giles, T.~Nagapetyan, and K.~Ritter}, {\em Multilevel {M}onte
  {C}arlo approximation of distribution functions and densities}, SIAM/ASA J.\
  Uncertain.\ Quantif., 3 (2015), pp.~267--295.

\bibitem{heinrich98}
{\sc S.~Heinrich}, {\em {M}onte {C}arlo complexity of global solution of
  integral equations}, J.\ Complexity, 14 (1998), pp.~151--175.

\bibitem{ledoux1991}
{\sc M.~Ledoux and M.~Talagrand}, {\em Probability in {B}anach Spaces:
  Isoperimetry and Processes}, vol.~23, Springer, 1991.

\bibitem{Linde1974}
{\sc W.~Linde and A.~Pietsch}, {\em Mappings of {G}aussian cylindrical measures
  in {B}anach spaces}, Theory Probab.\ Appl., 19 (1975), pp.~445--460.

\bibitem{WRCR:WRCR22402}
{\sc D.~Lu, G.~Zhang, C.~Webster, and C.~Barbier}, {\em An improved multilevel
  {M}onte {C}arlo method for estimating probability distribution functions in
  stochastic oil reservoir simulations}, Water Resources Research, 52 (2016),
  pp.~9642--9660.

\bibitem{plechavc2016multi}
{\sc P.~Plech{\'a}{\v{c}} and E.~von Schwerin}, {\em Multi-level {M}onte
  {C}arlo acceleration of computations on multi-layer materials with random
  defects}, arXiv preprint arXiv:1611.09784,  (2016).

\bibitem{primozic11}
{\sc T.~Primozic}, {\em Estimating expected first passage times using
  multilevel {M}onte {C}arlo algorithm}, {MSc} thesis, University of Oxford,
  2011.

\end{thebibliography}
\section{Proof of the Theorem \ref{t2}}\label{SM:prt2}

We write $a \preceq b$ if there exists a constant
$c>0$ that does not depend on the parameters 
$k,\, \delta,\, L_0,\, L_1,\, N_{L_0},\, \dots, N_{L_1}$ 
such that $a \leq c \cdot b$. Moreover, 
$a \asymp b$ stands for $a \preceq b$ and $b \preceq a$.

Recall the error decomposition \eqref{err:dec:num}. 
Use Lemma \ref{l2-xx} together with \eqref{eq300}, \eqref{eq301},
and the boundedness of $g$ to obtain
\[
e_4
\preceq
\log k \cdot
\left( \frac{1}{N_{L_0}} + \sum_{\ell=L_0+1}^{L_1}
\frac{
\min(\delta^{-\beta_4} \cdot M^{-\ell \cdot \beta_5},1)}
{N_\ell} 
\right)
\]
for the variance $e_4$ of $\M$.
Furthermore, (A3) states that
\[
e_3 \preceq
\min \left( \delta^{-2 \alpha_1} \cdot M^{-L_1 \cdot 2 \alpha_2},
M^{-L_1 \cdot 2 \alpha_3} \right) 
\]
for the bias $e_3$ of $\M$. Use Lemma \ref{l1} and assumptions (E2)
and (E3) to obtain the error bound
\begin{align}\label{g99}
\err^2(Q_k^{r}(\M)) 
&\preceq 
k^{-2 (r+1)} + \delta^{2(r+1)}  
+
\min \left( \delta^{-2 \alpha_1} \cdot M^{-L_1 \cdot 2 \alpha_2},
M^{-L_1 \cdot 2 \alpha_3} \right) \notag \\
& 
\quad
+ 
\log k \cdot
\left( \frac{1}{N_{L_0}} + \sum_{\ell=L_0+1}^{L_1}
\frac{
\min(\delta^{-\beta_4} \cdot M^{-\ell \cdot \beta_5},1)}
{N_\ell} 
\right)
\end{align}
see \cite[Eqn. (2.15)]{giles2014multi}.

We determine parameters of the 
algorithm $Q_k^{r}(\M)$
such that an
error of about $\Eps \in \left]0,\min(1,\delta_0^{r+1})\right[$ 
is achieved at a small cost.
More precisely, we minimize the upper bound \eqref{eq:cost:mlmc:new} 
for the cost,
subject to the constraint that the upper bound \eqref{g99} for the 
squared error is at most $\Eps^2$, up to multiplicative constants for both
quantities. 

First of all we consider the case $\delta>0$, and we choose
\begin{equation}\label{f9a}
\delta = \Eps^{1/(r+1)}
\end{equation}
and, up to integer rounding,
\begin{equation}\label{f9b}
k = \Eps^{-1/(r+1)}
\end{equation}
and
\begin{equation}\label{f9d}
N_{L_0} = \Eps^{-2} \cdot \log_M \Eps^{-1}.
\end{equation}
This yields
\[
\err^2(Q_k^{r}(\M)) \preceq 
\Eps^2 + a^2(L_1)
+  
\log \Eps^{-1} \cdot \sum_{\ell=L_0+1}^{L_1}
\frac{
\min(\delta^{-\beta_4} \cdot M^{-\ell \cdot \beta_5},1)}
{N_\ell}
\]
with
\[
a(L_1) = 
\min \left( \delta^{-\alpha_1} \cdot M^{-L_1 \cdot \alpha_2},
M^{-L_1\cdot \alpha_3}
\right) .
\]
Since $k \preceq N_{L_0}$ and $k \cdot \delta \asymp 1$,
we obtain 
\begin{equation}\label{f7}
\cost(Q_k^{r}(\M)) \preceq 
c (L_0, L_1, N_{L_0}, \dots, N_{L_1})
\end{equation}
with
\[
c (L_0, L_1, N_{L_0}, \dots, N_{L_1}) =
\sum_{\ell=L_0}^{L_1} N_\ell\cdot  M^\ell
\]
from \eqref{eq:cost:mlmc:new}.
In contrast to \cite[Eqn. (2.16)]{giles2014multi}
this cost bound does not depend on $k$.

We need $a(L_1) \leq \Eps$, which requires $L_1 \geq q \cdot L^*$
with
\[
L^* = \frac{1}{r+1} \cdot \log_M \Eps^{-1}.
\]
Consequently, we choose
\begin{equation}\label{f9c} 
L_1 = q\cdot L^*,
\end{equation}
up to integer rounding.

For a single-level algorithm with smoothing, i.e., for $L_0=L_1$
and $\delta >0$,
all parameters have thus been determined,
and we obtain
$\err(Q_k^{r}(\M)) \preceq \Eps$
as well as
\begin{equation}\label{f8}
c (L_1, L_1, N_{L_1}) \asymp
\Eps^{-2} \cdot \log \Eps^{-1} \cdot M^{q \cdot L^*}
= \Eps^{-2 - q/(r+1)}\cdot\log \Eps^{-1}. 
\end{equation}
For a single-level algorithm without smoothing we obtain
the same result,
if we formally choose the parameters $\alpha_i$ by
\eqref{g203}, which leads to $q = (r+1)/\alpha$.

For a multi-level algorithm with $L_0 < L_1$
we obtain
\[
\err^2 (Q_k^{r}(\M)) \preceq
\Eps^{2} +
\log \Eps^{-1} \cdot
\sum_{\ell=L_0+1}^{L_1}
\frac{v_\ell}{N_\ell}
\]
with
\[
v_\ell=\min( M^{L^* \cdot \beta_4} \cdot M^{-\ell\cdot\beta_5},1)
\]
as well as
\[
c (L_0, L_1, N_{L_0}, \dots, N_{L_1}) \asymp
\Eps^{-2} \cdot \log \Eps^{-1} \cdot M^{L_0}
+ \sum_{\ell=L_0+1}^{L_1} N_\ell \cdot M^\ell.
\]
We fix $L_0$ for the moment, and we minimize
\[
h(L_0,N_{L_0+1}, \dots, N_{L_1}) =
\Eps^{-2} \cdot \log \Eps^{-1} \cdot M^{L_0} +
\sum_{\ell=L_0+1}^{L_1} N_\ell \cdot M^\ell
\]
subject to
\[
\sum_{\ell=L_0+1}^{L_1} \frac{v_\ell}{N_\ell } \leq
\Eps^2 / \log \Eps^{-1}.
\]
A Lagrange multiplier leads to
\[
N_\ell = \Eps^{-2} \cdot \log \Eps^{-1} \cdot G(L_0) 
\cdot \left(v_\ell \cdot M^{-\ell}\right)^{1/2},
\]
up to integer rounding, which satisfies the constraint with 
\[
G(L_0) = 
\sum_{\ell = L_0 + 1}^{L_1} 
\left(v_\ell \cdot M^\ell\right)^{1/2} =
\sum_{\ell = L_0 + 1}^{L_1} 
\left(
\min( M^{L^* \cdot \beta_4} \cdot M^{-\ell\cdot\beta_5},1)
 \cdot M^{\ell}\right)^{1/2}.
\]
Moreover, this choice of $N_{L_0+1}, \dots ,N_{L_1}$ yields 
\begin{equation}\label{f33}
h(L_0,N_{L_0+1}, \dots, N_{L_1}) 
=\Eps^{-2} \cdot \log \Eps^{-1} \cdot
\left( M^{L_0} + G^2(L_0) \right).
\end{equation}

Put $L^\dag = \beta_4/\beta_5 \cdot L^*$. 
Consider the case $q\le\beta_4/\beta_5$.
Then we have $L_1 \leq L^\dag$, and therefore
\[
M^{L_0} + G^2(L_0)
= M^{L_0}+\left(\sum_{\ell=L_0+1}^{L_1}
M^{\ell/2}\right)^2\asymp M^{L_0}+ M^{L_1}\asymp M^{L^* \cdot q}.
\]
Observing 
\eqref{f8} 
we get \eqref{z1} in the present case
already by single-level algorithms.

{}From now on we consider the case
$q>\beta_4/\beta_5$.
Suppose that $L_0 < L^\dag$. Then we get
\begin{align*}
M^{L_0} + G^2(L_0) &\asymp
M^{L_0} + 
\left( \sum_{\ell = L_0 + 1}^{L^\dag} 
M^{\ell/2} \right)^2 + 
M^{L^* \cdot \beta_4} \cdot 
\left(
\sum_{\ell = L^\dag + 1}^{L_1} 
M^{\ell \cdot (1-\beta_5)/2}
\right)^2 \\
&\asymp
M^{L^\dag} + G^2(L^\dag).
\end{align*}
It therefore suffices to study the case 
$
L_0 \geq L^\dag,
$
where we have
$$
M^{L_0} + G^2(L_0)
=
M^{L_0}+ M^{L^* \cdot \beta_4} \cdot \left(\sum_{\ell=L_0+1}^{L_1}
M^{\ell \cdot (1-\beta_5)/2}\right)^2.
$$
Note that
\begin{align*}
\beta_5=1 \quad&\Rightarrow\quad 
M^{L_0} + G^2(L_0) \asymp M^{L_0}+M^{L^* \cdot \beta_4} \cdot 
(L_1-L_0)^2,\\
\beta_5>1 \quad&\Rightarrow\quad 
M^{L_0} + G^2(L_0) \asymp M^{L_0}+M^{L^* \cdot \beta_4}\cdot 
M^{L_0 \cdot (1-\beta_5)}\asymp M^{L_0},\\
\beta_5<1 \quad&\Rightarrow\quad 
M^{L_0} + G^2(L_0) \asymp M^{L_0}+M^{L^* \cdot \beta_4} \cdot 
M^{L_1 \cdot (1-\beta_5)}.
\end{align*}
Hence we choose 
\[
L_0=\beta_4/\beta_5\cdot L^*
\]
in all these cases.
Hereby we obtain
\[
M^{L_0} + G^2(L_0) \asymp 
M^{L^* \cdot \beta_4/\beta_5} \cdot
\begin{cases}
\left(L^*\right)^2, 
&\text{if $\beta_5=1$,}\\
1,   &\text{if $\beta_5> 1$,}
\end{cases} 
\]
as well as
\[
M^{L_0} + G^2(L_0) \asymp 
M^{\max(\beta_4/\beta_5,\beta_4+(1-\beta_5)\cdot q) \cdot L^*}
\quad \text{if $\beta_5 < 1$.}
\]
In any case, under the condition $q>\beta_4/\beta_5$, 
these estimates are superior to $M^{L^* \cdot q}$, cf.\ 
\eqref{f8}.
Use \eqref{f33} and $M^{L^*} = \Eps^{-1/(r+1)}$ to derive
\eqref{z2}--\eqref{z5}.

\section{On the Type-2 Constant of $(\R^k,|\cdot|_\infty)$}
From \cite[p.~159]{heinrich98}
we know how to exploit the type of a Banach
space $E$ in the analysis of multilevel algorithms taking values in
$E$. The key ingredient is the existence of a constant $c>0$
(as small as possible) such that
\begin{equation}\label{eq201}
\V (\sum_{i=1}^n \M_i) \leq c^2 \cdot \sum_{i=1}^n \V (\M_i)
\end{equation}
holds for every $n \in \N$ and every independent sequence
$\M_1,\dots,\M_n$ of $E$-valued square-integrable random elements.
The smallest such constant is called the type-2 constant of
the space $E$.

In the sequel we focus on the space $E=\R^k$, equipped with the
$\ell_\infty$-norm $|\cdot|_\infty$, and we provide an explicit
value for 
\[
c= c(k),
\]
i.e., an explicit upper bound for the type-2 constant of
this space.

The following result is due to \cite[Lemma 6]{Linde1974},
and stated there in a slightly different setting.
For convenience of the reader we present the proof from \cite{Linde1974} in the setting of the present paper. 
Let $Z_1,\dots$ be an independent sequence of standard normally 
distributed random variables. Moreover, let
\begin{equation}\label{eq204}
\gamma^2(k) = \ln(k+1) + 
(8/\pi)^{1/2} \sum_{j=2}^{k+1} \frac{1}{(\ln(j))^{1/2} \cdot j^2}.
\end{equation}

\begin{lemma}\label{l10}
For $n \in \N$ and  $x_1, \dots, x_n \in \R^k$ let
\[
X = \sum_{i=1}^n Z_i \cdot x_i.
\]
Then
\[
\Exp(|X|_\infty^2) \leq \gamma^2(k) \cdot \sum_{i=1}^n |x_i|_\infty^2.
\]
\end{lemma}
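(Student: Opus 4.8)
The plan is to bound $\Exp(|X|_\infty^2)$ by comparing $|X|_\infty = \max_{1\le \nu\le k} |X_\nu|$ to the maxima of the individual Gaussian coordinates. Write $\sigma_\nu^2 = \sum_{i=1}^n x_{i,\nu}^2$ for the variance of the $\nu$-th coordinate $X_\nu = \sum_i Z_i x_{i,\nu}$, and set $\sigma^2 = \max_\nu \sigma_\nu^2$. Note that $\sigma^2 \le \sum_{i=1}^n |x_i|_\infty^2$, so it suffices to show $\Exp(|X|_\infty^2)\le \gamma^2(k)\cdot\sigma^2$. By scaling we may assume $\sigma = 1$, so each $X_\nu$ is centered Gaussian with variance at most $1$. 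The first step is the standard tail bound: for a centered Gaussian $W$ with variance at most $1$, $\Pr(|W|>t)\le 2\,\Pr(Z>t)\le e^{-t^2/2}$ for $t$ not too small (more precisely $\Pr(|W|>t)\le \min(1, 2e^{-t^2/2})$, using $\Pr(Z>t)\le \tfrac12 e^{-t^2/2}$). A union bound then gives $\Pr(|X|_\infty > t)\le \min(1, 2k\,e^{-t^2/2})$.

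The main step is to integrate this tail bound carefully so that the resulting constant is exactly $\gamma^2(k)$ rather than something cruder. Using $\Exp(|X|_\infty^2) = \int_0^\infty 2t\,\Pr(|X|_\infty>t)\,dt$, I would split the integral at the threshold $t_0 = (2\ln(k+1))^{1/2}$ where the bound $2k\,e^{-t^2/2}$ crosses a convenient value. On $[0,t_0]$ one uses $\Pr(|X|_\infty>t)\le 1$, contributing $t_0^2 = 2\ln(k+1)$; on $[t_0,\infty)$ one uses the exponential tail. The hard part is the bookkeeping on the tail: one wants to express $\int_{t_0}^\infty 2t\cdot 2k\,e^{-t^2/2}\,dt$ and the cross terms in a form that telescopes or bounds the sum $\sum_{j=2}^{k+1}(\ln j)^{-1/2} j^{-2}$. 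The natural device is to handle the $k$ coordinates one at a time: bound $\Exp(|X|_\infty^2)$ by $\Exp(\max_{\nu\le k-1}|X_\nu|^2)$ plus a correction term for adding the $k$-th coordinate, and estimate that correction by $c\cdot (\ln k)^{-1/2} k^{-2}$ using the Gaussian tail of a single coordinate together with the fact that the running maximum already concentrates near $(2\ln k)^{1/2}$. Iterating this recursion from $k$ down to $1$ produces precisely the series in \eqref{eq204}, with the constant $(8/\pi)^{1/2}$ arising from $\Exp(|Z|\,\mathbf{1}_{\{|Z|>t\}})$-type estimates.

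Concretely, for the recursion step I expect to use: if $M_{k-1} = \max_{\nu\le k-1}|X_\nu|$ and $W = |X_k|$, then $M_k^2 = M_{k-1}^2 + (W^2 - M_{k-1}^2)\mathbf{1}_{\{W>M_{k-1}\}} \le M_{k-1}^2 + W^2\mathbf{1}_{\{W > t_{k-1}\}} + (\text{error from }M_{k-1} < t_{k-1})$ with an appropriate level $t_{k-1}\asymp(2\ln k)^{1/2}$, and then $\Exp(W^2\mathbf{1}_{\{W>t_{k-1}\}})$ is computed from the Gaussian density as $\int_{t_{k-1}}^\infty 2t\cdot\Pr(W>t)\,dt \le \int_{t_{k-1}}^\infty 2t\, e^{-t^2/2}\,dt = 2e^{-t_{k-1}^2/2}$, which for $t_{k-1}^2 = 2\ln k$ is $2/k^2$; the refinement to $(\ln k)^{-1/2}k^{-2}$ and the factor $(8/\pi)^{1/2}$ come from using the sharper bound $\Pr(W>t)\le (2/\pi)^{1/2} t^{-1} e^{-t^2/2}$ and integrating by parts. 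The anticipated main obstacle is choosing the thresholds $t_{k-1}$ so that the accumulated "overshoot" errors (from the event that the running maximum has not yet reached its typical level) sum to something of the same order as the main series and do not degrade the constant; this is exactly the delicate point handled in \cite[Lemma~6]{Linde1974}, and I would follow that argument, verifying that the specific value $\gamma^2(k)$ in \eqref{eq204} emerges from the telescoping sum.
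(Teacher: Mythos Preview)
Your direct tail-integration idea is on the right track, but the recursion on $k$ you propose for the correction series is both different from the paper's argument and flawed. First, an arithmetic slip: with $t_{k-1}^2 = 2\ln k$ one has $2e^{-t_{k-1}^2/2} = 2/k$, not $2/k^2$. Second, even with the sharper Mill's-ratio bound the quantity $\Exp\bigl(W^2\,1_{\{|W|>t_{k-1}\}}\bigr)$ is of order $(\ln k)^{1/2}/k$, not $(\ln k)^{-1/2}/k^2$. More fundamentally, for i.i.d.\ unit-variance Gaussians $\Exp(M_k^2)\sim 2\ln k$, so the true increment $\Exp(M_k^2)-\Exp(M_{k-1}^2)$ is of order $1/k$; and the ``overshoot'' event $\{M_{k-1}<t_{k-1}\}$ with $t_{k-1}^2=2\ln k$ has probability bounded away from zero, not negligible. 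A telescoping recursion therefore cannot produce a \emph{convergent} correction series. The form of $\gamma^2(k)$ in \eqref{eq204}---$\ln(k+1)$ plus a convergent sum---already signals that the right structure is ``single threshold plus tail bound'', not ``add one coordinate at a time''.

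The paper's (i.e., Linde's) argument uses no recursion. It works directly with $s=|X|_\infty^2$, uses the chi-squared tail inequality $1-G(s)\le 2G'(s)$ (one integration by parts), and---this is the idea you are missing---chooses a \emph{single} cutoff $\nu=\sup_{1\le j\le k}\sigma_j^2\,\ln(j+1)$ that depends on the individual coordinate variances $\sigma_j^2=\sum_i x_{i,j}^2$, not merely on $k$ or on $\max_j\sigma_j$. This gives $\nu\le\ln(k+1)$ (the leading term) while simultaneously $\nu/\sigma_j^2\ge\ln(j+1)$ for every $j$, so that the tail contribution of coordinate $j$ is dominated by the $j$-th summand of the series in \eqref{eq204}. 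By contrast, your uniform union bound discards this per-coordinate information and forces the split at $t_0^2=2\ln(k+1)$, overshooting the leading constant by a factor of two before the recursion even begins.
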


\begin{proof}
It suffices to show
$\Exp(|X|_\infty^2) \leq \gamma^2(k)$
in the case $\sum_{i=1}^n |x_i|_\infty^2 = 1$.
Put
\[
F(s) = P(\{ |X|^2_\infty \leq s\})
\]
and
\[
G(s) = P(\{ Z_1^2 \leq s\})
\]
for $s \geq 0$. 

Use integration by parts to obtain
\begin{align*}
1-G(s) &= (2\pi)^{-1/2} \int_s^\infty y^{-1/2} \cdot \exp(-y/2) \, dy\\
& = (2/\pi)^{1/2} \cdot s^{-1/2} \cdot \exp (-s/2)
- (2\pi)^{-1/2} \int_s^\infty y^{-3/2} \cdot \exp(-y/2) \, dy\\
&\leq
(2/\pi)^{1/2} \cdot s^{-1/2} \cdot \exp (-s/2)
= 2 G^\prime(s),
\end{align*}
see \cite[Lemma 4]{Linde1974},
where $1-G(s) \geq G^\prime(s)$ for $s \geq 2$ is shown as well.

The $j$-th component $X_j$ of $X$ is normally 
distributed with zero mean and variance 
\[
\sigma_j^2 = \sum_{i=1}^n x_{i,j}^2 \leq 1.
\]
Let $G(\infty)=1$, $G^\prime(\infty)=0$,
and $s/0=\infty$ if $s >0$ for notational convenience.
For $s>0$, 
\[
P(\{X_j^2 > s\}) =
1-G( s/\sigma_j^2) 
\leq
2 G^\prime( s/\sigma_j^2),
\]
and therefore
\begin{align*}
1 - F(s) &\leq \sum_{j=1}^k 
P(\{X_j^2 > s\}) \leq 
2  \sum_{j=1}^k 
G^\prime( s/\sigma_j^2).
\end{align*}
Hence, for every $\nu > 0$,
\begin{align*}
\Exp (|X|^2_\infty) &\leq \nu + 
\int_\nu^\infty (1-F(s))\, ds
\leq \nu + 2 \sum_{j=1}^k \int_\nu^\infty 
G^\prime( s/\sigma_j^2)\, ds\\
& =
\nu + 2 \sum_{j=1}^k \sigma_j^2 (1-  G( \nu/\sigma_j^2) )
\leq
\nu + 4 \sum_{j=1}^k \sigma_j^2 \, G^\prime( \nu/\sigma_j^2)\\
&=
\nu + (8/\pi)^{1/2} \sum_{j=1}^k 
\left( \sigma_j /\nu^{1/2} \cdot \exp(-\nu/(2\sigma_j^2)\right).
\end{align*}
Choose
\[
\nu = \sup_{1 \leq j \leq k} \bigl( \sigma_j^2 \cdot \ln (j+1)
\bigr) \leq \ln (k+1) 
\]
to obtain $\nu/\sigma_j^2 \geq \ln (j+1)$ and therefore
\[
\Exp (|X|^2_\infty) \leq \ln(k+1) + 
(8/\pi)^{1/2} \sum_{j=2}^{k+1} \frac{1}{(\ln(j))^{1/2} \cdot
j^2},
\]
as claimed.
\end{proof}

In addition to the normally distributed random variables $Z_i$
and the corresponding random vectors $X$
we also consider an independent sequence $\eps_1, \dots$ of
Bernoulli (or Rademacher) random variables,
i.e., $\eps_i$ takes the values $\pm 1$ with probability $1/2$.
For $n \in \N$ and $x_1,\dots,x_n \in \R^k$ we define
\[
\tilde{X}= \sum_{i=1}^n \eps_i \cdot x_i.
\]
We have
\begin{equation}\label{eq200}
\Exp(|\tilde{X}|_\infty^2 ) \leq \pi/2 \cdot
\Exp(|X|_\infty^2)
\end{equation}
as a particular case of Pisier (1973, Prop.\ 1), which
deals with arbitrary normed spaces and symmetric random variables
$Z_i$.

The Rademacher type 2 constant of the space $\R^k$, which we denote
by $T_2(k)$, is the smallest constant $\tilde{c}>0$ such that
\[
\Exp(|\tilde{X}|_\infty^2) \leq 
\tilde{c}^2 \cdot \sum_{i=1}^n |x_i|_\infty^2
\]
for every $n \in \N$ and all $x_1, \dots, x_n \in \R^k$.
Observe that the latter is an estimate for the variance of
$\tilde{X}$ in terms of the variances $|x_i|_\infty^2$ of the
random vectors $\eps_i \cdot x_i$.
This estimate actually extends to every independent sequence 
of random vectors, at the expense of a slightly
larger constant. More precisely, for every $n \in \N$ and
every independent sequence $\M_1,\dots,\M_n$ of
square-integrable random vectors 
with values in $\R^k$, we have \eqref{eq201} with
\[
c(k) = 2\, T_2(k).
\]
This result actually holds true with the corresponding type 2
constant for every Banach space $E$ of this type ,
see \cite[Prop.~9.11]{ledoux1991}.

Lemma \ref{l10} together with \eqref{eq200} provides an explicit
constant for \eqref{eq201} to hold in the case $E=\R^k$, namely
\begin{equation}\label{eq202}
c(k) = (2\pi)^{1/2} \cdot \gamma(k).
\end{equation}
This constant is of the order $(\ln(k))^{1/2}$, which is
known to be optimal for the spaces~$\R^k$.
\end{document}